\newtheorem{thm}{Theorem} 
\newtheorem{theorem}{Theorem}
\newtheorem{Question}[thm]{Question}
\newtheorem*{theorem*}{Theorem}
\newtheorem{lemma}[thm]{Lemma}
\newtheorem{claim}[thm]{Claim}
\newtheorem{proposition}[thm]{Proposition}
\theoremstyle{definition}
\theoremstyle{definition}
\theoremstyle{definition}
\newtheorem{definition}[thm]{Definition}
\newcommand{\SRA}{\operatorname{SRA}}
\newcommand{\CAT}{\operatorname{CAT}}
\newcommand{\ATB}{\operatorname{ATB}}
\newcommand{\dist}{\operatorname{dist}}
\newcommand{\Lip}{\operatorname{Lip}}
\newcommand{\Spt}{\operatorname{Spt}}
\newcommand{\diam}{\operatorname{diam}}
\numberwithin{equation}{section}
\def\R{{\mathbb R}}
\def\Z{{\mathbb Z}}
\def\N{{\mathbb N}}
\def\wt{\widetilde}
\def\wb{\overline}
\def\GG{{\Gamma}}
\def\<{\langle}
\def\>{\rangle}
\def \ee{{\epsilon}}
\def \dd{{\delta}}
\def \aa {{\alpha}}
\def \ll {{\lambda}}
\def \tt {{\theta}}
\def \zz {{\zeta}}
\begin{document}

\title{Bi-Lipschitz embeddings of $\SRA$-free spaces into Euclidean spaces}

\begin{abstract}
$\SRA$-free spaces is a wide class of metric spaces including  
 finite dimensional Alexandrov spaces of non-negative curvature, 
 complete Berwald spaces of nonnegative flag curvature, 
 Cayley Graphs of virtually abelian groups and  doubling metric spaces of non-positive Busemann 
 curvature with extendable geodesics.   
This class also includes arbitrary big balls in 
 complete, locally compact $\CAT(k)$-spaces $(k \in \R)$ with locally extendable geodesics, 
 finite-dimensional Alexandrov spaces of curvature $\ge k$ with $k \in R$
 and complete Finsler manifolds satisfying the doubling condition.  
 
We show that $\SRA$-free spaces allow bi-Lipschitz embeddings in Euclidean spaces.
As a corollary we obtain a quantitative bi-Lipschitz embedding theorem for balls in 
finite dimensional Alexandrov spaces of curvature bounded from below 
conjectured by S. Eriksson-Bique.

The main tool of the proof is an extension theorem for bi-Lipschitz maps into Euclidean spaces.
This extension theorem is close in nature with the embedding theorem of J. Seo and 
may be of independent interest.
\end{abstract}

\keywords{bi-Lipschitz embedding, Alexandrov space}
\subjclass[2010]{51F99}



 \author{Vladimir Zolotov}
 \address[Vladimir Zolotov]{Steklov Institute of Mathematics, Russian Academy of Sciences,
                     27 Fontanka,  St.Petersburg,  191023, Russia and  
					  Chebyshev Laboratory, St. Petersburg State University, 14th Line V.O., 29B, Saint Petersburg, 199178, Russia}
 \email[Vladimir Zolotov]{paranuel@mail.ru}

\maketitle

\section{Introduction}

\footnote{Acknowledgements: I thank my advisor Sergey V. Ivanov for all his ideas,
 advice and continuous support. The mission of investigating connections between $\SRA(\aa)$-condition
and bi-Lipschitz embeddability into Euclidean spaces was given to me by Alexander Lytchak. 
I'm very grateful for that.   I express my gratitude to Nina Lebedeva for multiple useful discussions. 
Research is supported by "Native towns", a social investment program of PJSC "Gazprom Neft".}

\begin{definition}\label{def:sra-free}
Let $k \in \N$ and $0 < \aa < 1$. We say that a metric space $X$ is free of $k$-point $\SRA(\aa)$-subspaces
if for every $k$-point subset $Y \subset X$ there exist
$x,z,y \in Y$ such that 
$$d(x,y) > \max\{d(x,z) + \aa d(z,y), \aa d(x,z) +  d(z,y)\}.$$
We say that a metric space is an $\SRA(\aa)$-free space if it is free of $k$-point $\SRA(\aa)$-subspaces
for some $k \in \N$.
\end{definition}


The condition of being $\SRA(\aa)$-free is a strengthening of the doubling condition (see \cite[Theorem 6]{LOZ}).
The reason why $\SRA(\aa)$-free spaces were studied in previous works \cite{LOZ, ZSC}  is the following.
By showing that a space is an $\SRA(\aa)$-free space we are showing that:
\begin{itemize}
\item{this space does not contain large $\aa$-snowflakes as isometric subspaces (see \cite[Proof of Theorem 1.1]{BS}). 
}
\item{bounded self-contracted curves are rectifiable in this space
(see \cite[Theorem 1]{ZSC}).}
\end{itemize}

The aim of this paper is to give a new application for the concept by proving that 
 every  $\SRA(\aa)$-free space allows a bi-Lipschitz  embedding into some Euclidean space.
 
 In the present paper $\R^n$ is always considered as metric spaces with Euclidean metric.
 
\begin{theorem}\label{EmbThm}
For $k \in \N$ and $0 < \aa < 1$ there exist $N \in \N$ and $D \ge 1$ satisfying the following.
For every metric space $X$ which is free of $k$-point $\SRA(\aa)$-subspaces there exists 
a bi-Lipschitz embedding $\Phi:X \rightarrow \R^N$ which bi-Lipschitz
distortion does not exceed $D$.  
\end{theorem}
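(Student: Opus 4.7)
The overall plan is to first establish an extension theorem for bi-Lipschitz maps into Euclidean spaces from $\SRA(\aa)$-free sources, and then bootstrap it into the global embedding stated in Theorem \ref{EmbThm}. By \cite[Theorem 6]{LOZ}, any $\SRA(\aa)$-free space is doubling with constants depending only on $k$ and $\aa$, so the standard tool kit for doubling spaces --- maximal $\ll^{i}$-separated nets $N_i \subset X$, greedy packings, and partitions of unity --- is available and quantitatively controlled in terms of $k,\aa$ alone.

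The extension theorem I would aim for says roughly: given a bi-Lipschitz embedding $f\colon A\to\R^{m}$ of a finite subset $A\subset X$ with distortion bounded by some $L$, and a new point $p\in X\setminus A$, one can extend $f$ to a bi-Lipschitz embedding $f'\colon A\cup\{p\}\to\R^{m+c}$, where both $c$ and the new distortion depend only on $k,\aa,L$ --- not on $|A|$. From such a statement I would deduce Theorem \ref{EmbThm} by incrementally embedding the nets $N_i$ scale by scale. The key quantitative observation is that, because the doubling constant is uniform, the same block of coordinate directions can be reused across regions that are well-separated at a given scale, so dimensions add across \emph{scales} rather than across \emph{points}; combining the scale-by-scale partial maps via weighted sums with geometrically decaying weights --- the classical Assouad trick --- then produces a global embedding into $\R^{N}$ with $N=N(k,\aa)$ and distortion $D=D(k,\aa)$.

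The main obstacle I anticipate is proving the extension theorem itself with a dimension increment $c$ independent of $|A|$. The plan is to use the $\SRA(\aa)$-free hypothesis as a rigidity constraint on how points of $A$ can be arranged near $p$ at the scale where the extension is to be produced: whenever a candidate low-dimensional extension fails to be bi-Lipschitz, one should be able to locate a $k$-point subset of $A\cup\{p\}$ violating the defining inequality of Definition \ref{def:sra-free}, giving a Ramsey/pigeonhole-style contradiction produced from near-equilateral or near-snowflake triples among the witnesses of failure. This is also where the kinship with J. Seo's embedding theorem mentioned in the abstract enters naturally: Seo's argument exploits very similar combinatorial constraints on how points in a doubling space can sit in a Euclidean target, and the extension I need should be provable by recasting her strategy in an incremental, one-point-at-a-time form tailored to the SRA-free setting.
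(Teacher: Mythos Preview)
Your proposal has the right large-scale ingredients (doubling, Assouad-style gluing, an extension theorem), but the specific plan has a genuine gap, and it diverges from the paper's argument in a structural way.

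The gap is in your one-point extension. You want, for finite $A\subset X$ and $p\notin A$, to extend $f\colon A\to\R^m$ to $f'\colon A\cup\{p\}\to\R^{m+c}$ with $c$ and the new distortion bounded in terms of $k,\aa$ and the old distortion $L$. Even granting this, you cannot iterate it: each application feeds the output distortion back as the next input $L$, so unless you prove $L'\le L$ (which you do not, and which is not plausible in this generality), the distortion blows up along any infinite sequence of insertions. Your remedy---reuse coordinate blocks across well-separated regions at each scale and then Assouad-combine the scales---is really a different construction, not an iteration of the one-point extension; to make it work you would need, at each scale, a \emph{uniform} embedding of the net restricted to each ball, with bounds independent of the previously built map. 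That is precisely where the SRA-free hypothesis must do real work, and your description of that step (``locate a $k$-point subset violating Definition~\ref{def:sra-free} by pigeonhole among witnesses of failure'') is not a mechanism; it does not say which $k$ points, at which scale, or why the failure of a Euclidean placement forces the SRA inequality.

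The paper organizes the argument differently and this is what closes the gap. It runs induction on $k$, not on points or scales. The inductive step isolates a subset $X'\subset X$ that is free of $(k{-}1)$-point $\SRA(\aa/2)$-subspaces (built from a radius function $R(x)$ recording the largest scale at which a $(k{-}1)$-point $\SRA(\aa/2)$-configuration through $x$ persists), embeds $X'$ by induction, and then extends to all of $X$ in one shot via the paper's extension theorem (Theorem~\ref{ExtThm}): extend a bi-Lipschitz $\phi\colon Y\to\R^n$ to $X$ whenever balls $B_{\tt\,d(x,Y)}(x)$ admit uniform Euclidean embeddings. The SRA-free hypothesis enters through the concrete local lemma (Lemma~\ref{LocEmb}): if $x$ sits in a $(k{-}1)$-point $\SRA(\aa/2)$-configuration with minimal gap $R$, then the map $y\mapsto(d(y,x_2),\dots,d(y,x_{k-1}))$ is bi-Lipschitz on $B_{\aa R/6}(x)$, because any failure of co-Lipschitzness would complete the configuration to a $k$-point $\SRA(\aa)$-set. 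This is the precise ``witness'' argument your sketch is reaching for, and it is what supplies the local embeddings needed by Theorem~\ref{ExtThm}; there is no accumulating distortion because the extension is a single step per value of $k$, not an infinite iteration.
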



The class of $\SRA(\aa)$-free spaces is huge, it includes 
\begin{itemize}
\item{finite dimensional normed spaces 
(see \cite[Proposition 15]{LOZ} and \cite[Theorem 2]{LOZ}),}
\item{finite dimensional Alexandrov spaces of non-negative curvature
 (see \cite[Proposition 14]{LOZ} and \cite[Theorem 2]{LOZ}),}
\item{complete Berwald spaces of nonnegative flag curvature 
(see \cite[ Proposition 17]{LOZ}) and \cite[Theorem 2]{LOZ}),}
\item{Cayley Graphs of virtually abelian groups 
(see \cite[Proposition 18]{LOZ}  and \cite[Theorem 2]{LOZ}),}
\item{doubling metric spaces of non-positive Busemann curvature with extendable geodesics 
(see \cite[Proposition 12]{LOZ}  and \cite[Theorem 2]{LOZ}),}
\item{globally $\ATB(\ee)$-spaces 
(see Definition \ref{ATB} and \cite[Theorem 2]{LOZ})).}
\end{itemize}
It also includes arbitrary big balls in following classes of spaces:
\begin{itemize}
\item{complete, locally compact $\CAT(k)$-spaces $(k \in \R)$ with locally extendable geodesics 
(see \cite[Proposition 12]{LOZ}, \cite[Theorem 2]{LOZ}) and \cite[Theorem 5]{ZSC},}
\item{finite-dimensional Alexandrov spaces of curvature $\ge k$ with $k \in R$ 
(see \cite[Proposition 14]{LOZ}, \cite[Theorem 2]{LOZ}) and \cite[Theorem 6]{ZSC},}
\item{complete locally compact Finsler manifolds  
(see \cite[Proposition 16]{LOZ}, \cite[Theorem 2]{LOZ}) and \cite[Theorem 5]{ZSC},}
\item{proper locally $\ATB(\ee)$-spaces 
(see Definition \ref{ATB}, \cite[Theorem 2]{LOZ}) and \cite[Theorem 5]{ZSC}).}
\end{itemize}

Thus, Theorem \ref{EmbThm} generalizes several previously known results
on bi-Lipschitz embeddings into Euclidean spaces, see \cite[Theorem 1–3, Theorem 1–4]{EB},
 \cite[Therorem 4.5]{LP-BilEmb},
and its generalization \cite[Theorem 4]{ZSC}. On the other hand 
Theorem \ref{EmbThm} does not provide a criterion for embeddability into Euclidean 
spaces i.e., there are metric spaces which allow bi-Lipschitz embeddings into 
Euclidean spaces which are not  $\SRA(\aa)$-free spaces for any $0 < \aa < 1$. 

As an application of Theorem \ref{EmbThm} we prove the following  
embedding result for Alexandrov spaces of curvature bounded from below, 
conjectured by S. Eriksson-Bique \cite[Conjecture 1–9]{EB}.
 
\begin{theorem} \label{AlexThm}
For $n \in \N$, $k < 0$ and $R > 0$ there exist $D > 0$ and $N \in \N$ satisfying the following.
For every $n$-dimensional Alexandrov space of curvature $\ge k$ and every $x \in X$
there exists an embedding $\phi:B_R(x) \rightarrow \R^N$ which 
bi-Lipschitz distortion does not exceed $D$.

For $n \in N$ there exist $D_0 > 0$ and $N_0 \in \N$ such that 
every $n$-dimensional Alexandrov space of non-negative curvature 
allows an embedding into $\R^{N_0}$ which 
bi-Lipschitz distortion does not exceed $D_0$.
\end{theorem}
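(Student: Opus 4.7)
The plan is to reduce Theorem \ref{AlexThm} to Theorem \ref{EmbThm} via the structural results already cited in the introduction, which identify balls (respectively the whole space, in the non-negative case) in finite dimensional Alexandrov spaces as $\SRA(\aa)$-free spaces in a quantitatively uniform manner.

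For the first statement, fix $n \in \N$, $k < 0$ and $R > 0$. I would first produce $\aa = \aa(n,k,R) \in (0,1)$ and $m = m(n,k,R) \in \N$ such that for every $n$-dimensional Alexandrov space $X$ of curvature $\ge k$ and every $x \in X$, the ball $B_R(x)$ is free of $m$-point $\SRA(\aa)$-subspaces. This is the combined content of \cite[Proposition 14]{LOZ}, \cite[Theorem 6]{ZSC}, and the local-to-global passage in \cite[Theorem 2]{LOZ}; the crucial point is that the $\SRA$-freeness parameters extracted from these results depend only on $n$, $k$ and the scale $R$, not on the particular space. Once $\aa$ and $m$ are fixed, Theorem \ref{EmbThm} furnishes $N = N(m,\aa)$ and $D = D(m,\aa)$ such that $B_R(x)$, viewed as a metric space in its own right, admits a bi-Lipschitz embedding into $\R^N$ with distortion at most $D$. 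Chasing the dependencies, $N$ and $D$ depend only on $n$, $k$, $R$, as required.

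For the second statement the argument would be identical but one step shorter: by \cite[Proposition 14]{LOZ} combined with \cite[Theorem 2]{LOZ}, every $n$-dimensional Alexandrov space of non-negative curvature is globally free of $m_0$-point $\SRA(\aa_0)$-subspaces with $\aa_0$ and $m_0$ depending only on $n$. Applying Theorem \ref{EmbThm} directly to the whole space then yields a global bi-Lipschitz embedding into $\R^{N_0}$ with distortion at most $D_0$, where $N_0$ and $D_0$ depend only on $n$.

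The only nontrivial point to verify is that the $\SRA(\aa)$-freeness parameters for $B_R(x)$ in the curvature $\ge k$ setting are genuinely uniform across the entire family of $n$-dimensional Alexandrov spaces of curvature $\ge k$, rather than depending on the particular space. This uniformity is already embedded in the cited quantitative results: the local $\SRA$-freeness of Alexandrov spaces of curvature $\ge k$ is established via a Toponogov-type comparison whose constants are determined purely by $n$, $k$, and the scale on which one works, and the localization result of \cite[Theorem 5]{ZSC}/\cite[Theorem 6]{ZSC} preserves this uniform quantitative dependence when passing to balls of radius $R$. No new geometric input beyond what is already recorded in \cite{LOZ} and \cite{ZSC} is required.
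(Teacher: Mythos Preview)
Your proposal is correct and follows essentially the same route as the paper: establish that $B_R(x)$ (respectively the whole space in the non-negative case) is $\SRA(\aa)$-free with parameters depending only on $n,k,R$ (respectively $n$), then invoke Theorem~\ref{EmbThm}. The paper's proof differs only in presentation: it restates the needed inputs from \cite{LOZ} explicitly as Propositions~\ref{ATB-no-SRA} and~\ref{CBBs-are-ATB}, passing through the $\ATB(\ee)$-condition, precisely to make visible the uniform dependence of the constants that you identify as ``the only nontrivial point to verify.''
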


The main tool for the proof of Theorem \ref{EmbThm} is the following extension theorem
for bi-Lipschitz maps into Euclidean spaces.

\begin{theorem}\label{ExtThm}
Let $\ll \in \N$ and $X$ be a metric space with the doubling constant $\ll$.
Let $Y \subset X$ and $\phi: Y \rightarrow \R^n$ be a bi-Lipschitz embedding into some Euclidean space.
Suppose that there exist $D, \wb n  > 0$ and $0 < \tt \le 1$ such that for every $x \in X$ there exists an embedding 
of $B_{\tt d(x,Y)}(x)$ into $\R^{\wb n}$ which bi-Lipschitz distortion does not exceed $D$.
Then there exist $\wt \phi:X \rightarrow \R^{n} \times \R^{N} = \R^{n + N}$ such that 
\begin{enumerate}
\item{$N$ and $\dist(\wb \phi)$ are bounded by some functions of $n$, $\wb n$, $\dist(\phi)$,  $D$, $\tt$ and $\ll$.}\label{Bound}
\item{for every $y \in Y$ we have $\wb \phi(y) = (\phi(y) , 0).$} \label{Ext}
\end{enumerate}
\end{theorem}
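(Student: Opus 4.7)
The plan is to realize $\widetilde\phi$ as
$$\widetilde\phi(x) = \bigl(\bar\phi(x),\,\Psi_1(x),\ldots,\Psi_K(x)\bigr) \in \R^n \times (\R^{\bar n})^K,$$
where $\bar\phi$ is a Lipschitz extension of $\phi$ to all of $X$ and each $\Psi_j$ is a ``Seo-type'' gadget built from a colored Whitney decomposition of $X\setminus Y$. The number of colors $K$ will depend only on the doubling constant $\ll$. The component $\bar\phi$ takes care of condition \eqref{Ext} (it agrees with $\phi$ on $Y$ and the remaining $\Psi_j$'s vanish on $Y$) and gives the upper Lipschitz bound on the $\R^n$ part; the gadgets $\Psi_j$ supply the bi-Lipschitz lower bound on $X\setminus Y$.

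First, extend $\phi:Y\to\R^n$ coordinatewise by McShane to $\bar\phi:X\to\R^n$ with $\Lip(\bar\phi)\le\sqrt{n}\,\Lip(\phi)$. Next, produce a Whitney-type cover $\{B_i\}=\{B(x_i,r_i)\}$ of $X\setminus Y$ with $r_i=\sigma\,d(x_i,Y)$, where $\sigma=\sigma(\tt)$ is small enough that $B(x_i,10r_i)\subset B_{\tt d(x_i,Y)}(x_i)$; by the doubling property this cover has bounded multiplicity, and by a standard coloring argument on doubling spaces it may be partitioned into $K=K(\ll)$ color classes $\mathcal C_1,\ldots,\mathcal C_K$ whose members are pairwise very far apart relative to their radii. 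For each $i$, let $\psi_i:B(x_i,\tt d(x_i,Y))\to\R^{\bar n}$ be the local embedding given by the hypothesis, rescaled and translated so that $\psi_i(x_i)=0$ and $\psi_i$ is $1$-Lipschitz with $1/D$-lower bound. Let $\eta_i$ be a Lipschitz cut-off equal to $r_i$ on $\tfrac12 B_i$, supported in $B_i$, with $\Lip(\eta_i)\le 2$. Define
$$\Psi_j(x) \;=\; \sum_{i\in\mathcal C_j} \eta_i(x)\,\psi_i(x),$$
with each summand extended by $0$ outside its support. Because different $\eta_i\psi_i$ with $i\in\mathcal C_j$ have disjoint supports, $\Psi_j$ is well-defined and globally Lipschitz with constant depending only on $\sigma$, $D$, $\ll$, and $\Psi_j\equiv 0$ on $Y$ (since $\eta_i$ vanishes outside $B_i \subset X\setminus Y$).

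For the bi-Lipschitz lower bound on $\widetilde\phi$, fix $x,x'\in X$ and split into regimes. If $x,x'\in Y$, the conclusion is immediate from $\bar\phi|_Y=\phi$. If $d(x,x')\le c\min(d(x,Y),d(x',Y))$ for a small $c=c(\sigma,\ll)$, the bounded-multiplicity property puts $x,x'$ into a common enlarged Whitney ball $B_{i_0}$; there $\eta_{i_0}\ge r_{i_0}/2$ and the $1/D$-lower bound of $\psi_{i_0}$ gives $\|\Psi_{j(i_0)}(x)-\Psi_{j(i_0)}(x')\|\gtrsim d(x,x')/D$. If instead $d(x,x')\gtrsim\max(d(x,Y),d(x',Y))$, at least one point, say $x'$, has $d(x',Y)\gtrsim d(x,x')$; the Whitney ball $B_{i_0}$ containing $x'$ has $r_{i_0}\asymp d(x',Y)$, and by choosing $x'$ inside $\tfrac12 B_{i_0}$ (or handling the remaining case by a slightly finer partition using balls centered on the $(r_i/4)$-net), one shows that $\|(\eta_{i_0}\psi_{i_0})(x')\|\gtrsim r_{i_0}d(x',x_{i_0})/D\gtrsim d(x,x')/D$ while $(\eta_{i_0}\psi_{i_0})(x)=0$ because $x\notin B_{i_0}$ in this regime. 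The intermediate regime $d(x,x')\asymp d(x,Y)\asymp d(x',Y)$ is treated by the same principle applied to whichever Whitney ball captures one of the two points.

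The main obstacle is the bi-Lipschitz lower bound in the mixed regime where one point lies in $Y$ and the other at intermediate distance from $Y$: here $\bar\phi$ alone is only Lipschitz (not bi-Lipschitz), so one must verify that the appropriate $\Psi_j$ has a nontrivial contribution at the non-$Y$ point. This is handled by the scaling $\eta_i\asymp r_i\asymp d(x_i,Y)$, which makes the magnitude of the $\Psi_j$-coordinate at any $x'\in X\setminus Y$ comparable to $d(x',Y)$, hence at least comparable to $d(x,x')$ when $x\in Y$. Finally, counting gives $N=K\bar n$ and a distortion bound that is polynomial in $\dist(\phi)$, $D$, $1/\tt$, $n$, $\bar n$, $\ll$, as required.
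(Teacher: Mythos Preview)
Your overall architecture matches the paper's: a McShane extension $\bar\phi$ handles the ``far from $Y$'' regime (this is the paper's Lemma~3), and an Assouad/Seo--type gadget built from a colored cover of $X\setminus Y$ handles the complementary regime (this is the paper's Lemma~4). So the strategy is right.

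However, your normalization of the building blocks is off and breaks the lower bound. You set $\psi_i(x_i)=0$, $\psi_i$ $1$-Lipschitz with $1/D$ lower bound, and $\eta_i\equiv r_i$ on $\tfrac12 B_i$. Then on $\tfrac12 B_{i_0}$ one has
\[
\eta_{i_0}(x)\psi_{i_0}(x)-\eta_{i_0}(x')\psi_{i_0}(x')
= r_{i_0}\bigl(\psi_{i_0}(x)-\psi_{i_0}(x')\bigr),
\]
so the lower bound in your ``close'' regime is $\tfrac{r_{i_0}}{D}\,d(x,x')$, not $\tfrac{1}{D}\,d(x,x')$; this degenerates as $r_{i_0}\to 0$. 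The same scaling error invalidates the claim that ``the magnitude of the $\Psi_j$-coordinate at any $x'\in X\setminus Y$ is comparable to $d(x',Y)$'': take $x'=x_{i_0}$, where $\Psi_j(x_{i_0})=\eta_{i_0}(x_{i_0})\psi_{i_0}(x_{i_0})=r_{i_0}\cdot 0=0$, yet $d(x_{i_0},Y)>0$. Consequently neither the close regime nor the mixed regime (one point in $Y$) is actually controlled. The paper avoids this by \emph{shifting} the local embedding rather than centering it at $0$: its block $P_x$ is bi-Lipschitz with $s=1$ on the inner ball, takes values in a hyperplane at height $\asymp 2^k$, and is set to $0$ outside; the height gives the inside/outside separation, while $s=1$ gives a uniform co-Lipschitz constant independent of the scale. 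In your language, the fix is to take $\psi_i(x_i)$ of norm $\asymp 1$ (an extra coordinate suffices) and $\eta_i\equiv 1$ on $\tfrac12 B_i$; then $\eta_i\psi_i$ is uniformly bi-Lipschitz on the inner ball and has magnitude $\asymp r_i$ there. A secondary point: with balls of all scales present, a single spatial coloring is not enough to keep same-color supports from interfering across scales; the paper additionally groups scales into residue classes modulo a large $M$, and you will need an analogous device.
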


The first remark about Theorem \ref{ExtThm} we should do  is that the second part 
of the conclusion (\ref{Ext}) is not that important. Let us denote by Theorem \ref{ExtThm}-emb
a version of Theorem \ref{ExtThm} where (\ref{Ext}) is skipped. Theorem \ref{ExtThm}-emb
implies Theorem \ref{ExtThm} if combined with the following proposition.
\begin{proposition}(\cite[Theorem 1.2]{MMMR}, see also \cite[Theorem
5.5]{AV97})
Let $X \subset \R^n$ and $f:X \rightarrow \R^m$ be a bi-Lipschitz map with distortion 
at most $D$. There exists  $f': \R^n \rightarrow \R^{m+n}$ with the distortion at most  $3D$
and such that  
$$f'(x) = (f(x),0),$$
for every $x \in X$.
\end{proposition}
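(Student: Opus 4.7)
The plan is to write $f'(x) = (\tilde f(x), g(x))$, where $\tilde f: \R^n \to \R^m$ extends $f$ with the same upper Lipschitz bound and $g: \R^n \to \R^n$ is a Lipschitz map vanishing on $X$, then balance the two coordinates so that the combined distortion is at most $3D$. Set $L = \Lip(f)$ and $\ell = L/D$, so that $\ell \|x-y\| \le \|f(x)-f(y)\| \le L \|x-y\|$ on $X$. First I would extend $f$ to an $L$-Lipschitz map $\tilde f: \R^n \to \R^m$ using Kirszbraun's theorem; this automatically secures the required identity $f'(x) = (f(x), 0)$ on $X$ as long as $g|_X \equiv 0$.

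To construct $g$, I would use a Whitney decomposition $\{Q_j\}$ of the open set $\R^n \setminus X$ into dyadic cubes with $\diam(Q_j) \asymp \dist(Q_j, X)$, together with a subordinate Lipschitz partition of unity $\{\psi_j\}$. For each $j$ pick an arbitrary $p_j \in X$ at distance $O(\diam(Q_j))$ from $Q_j$, and set
\[
p(x) = \begin{cases} x, & x \in X, \\ \sum_j \psi_j(x)\, p_j, & x \notin X. \end{cases}
\]
Standard Whitney estimates show that $p: \R^n \to \R^n$ is $C_n$-Lipschitz and satisfies $\|x - p(x)\| \le C_n \dist(x, X)$. Setting $g(x) := cL\,(x - p(x))$ for a small constant $c > 0$ to be chosen yields a Lipschitz map with $g|_X \equiv 0$ and $\Lip(g) = O(cL)$. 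Define $f'(x) := (\tilde f(x), g(x))$; the extension property on $X$ then holds by construction, and by Pythagoras $\Lip(f') \le L\sqrt{1 + O(c^2)}$.

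The lower Lipschitz bound splits by the ratio $r := (\dist(x, X) + \dist(y, X))/\|x-y\|$. When $r$ is small, pick nearest points $\pi(x), \pi(y) \in X$ and estimate
\[
\|\tilde f(x) - \tilde f(y)\| \ge \|f(\pi(x)) - f(\pi(y))\| - L(\dist(x,X) + \dist(y,X)) \ge \bigl(\ell(1-r) - Lr\bigr)\|x - y\|,
\]
obtaining the lower bound from the $\tilde f$-coordinate alone provided $r$ is below a threshold determined by $D$. When $r$ is large, both $x$ and $y$ sit in Whitney cubes of comparable scale, so the smoothed nearest-point map $p$ varies slowly on that scale and $g(x) - g(y)$ is close to $cL(x - y)$, supplying the bound via the $g$-coordinate. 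Tuning $c$ and the threshold against $D$ and combining the two estimates through the Pythagorean identity $\|f'(x) - f'(y)\|^2 = \|\tilde f(x) - \tilde f(y)\|^2 + \|g(x) - g(y)\|^2$ allows one to arrange distortion at most $3D$. The main obstacle is the intermediate regime of $r$, where neither coordinate alone dominates and both contributions must be combined carefully; this calculation is what pins down the precise constant $3$ in front of $D$.
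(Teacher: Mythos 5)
This proposition is quoted in the paper from the literature (\cite[Theorem 1.2]{MMMR}, \cite[Theorem 5.5]{AV97}); the paper itself gives no proof, so I will assess your argument on its own merits.

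The construction of the extra coordinate $g(x)=cL\bigl(x-p(x)\bigr)$ does not deliver the lower bound you need. Your reasoning in the ``large $r$'' regime is that $p$ ``varies slowly'', so that $g(x)-g(y)\approx cL(x-y)$. But a Whitney-smoothed nearest-point selector $p$ has Lipschitz constant of order $1$ (with a dimensional constant) uniformly, with no decay as $\dist(x,X)$ grows: the cutoffs $\psi_j$ have Lipschitz constant $\asymp\dist(x,X)^{-1}$ while the points $p_j$ spread over a set of diameter $\asymp\dist(x,X)$, and these scales cancel. Hence $\|p(x)-p(y)\|$ can be comparable to $\|x-y\|$, and the difference $(x-y)-(p(x)-p(y))$ can cancel. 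The structural failure is visible when $X$ is (close to) an affine subspace: there $p$ is essentially the orthogonal projection onto $X$, so $x-p(x)$ records only the normal component, and for $x,y$ at equal height differing in a direction parallel to $X$ one gets $g(x)-g(y)=0$ identically. Thus $g$ does not separate points ``parallel to $X$'' at all, and no tuning of $c$ can recover this. You would then have to rely entirely on $\tilde f$, but an arbitrary $L$-Lipschitz extension $\tilde f$ is not co-Lipschitz off $X$, and your estimate for it degenerates precisely once $r\gtrsim 1/(D+1)$. So there is a whole regime (roughly $1/(D+1)\lesssim r$, including the affine-parallel configurations) where neither coordinate supplies a lower bound, and the asserted ``Pythagorean combination'' cannot produce one out of two vanishing contributions.

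A second, independent issue is the constant: the proposition asserts a dimension-free distortion bound $3D$, but every ingredient you use (Whitney constants, the partition of unity, $\Lip(p)=C_n$, and McShane's $\sqrt{m}$ if one uses it) introduces dependence on $n$ or $m$. To get a clean $3D$ the published proofs use a genuinely different mechanism, not a single displacement coordinate $x-p(x)$: the extra coordinates are built multiscale (Assouad-type bump functions at every dyadic scale, grouped by a bounded coloring), which is exactly the structure of the paper's Lemma \ref{ExtLem2}, and the co-Lipschitz bound at a given pair $x,y$ comes from a bump at the \emph{scale} $\|x-y\|$, not from a fixed projection-displacement map. Your extension idea, the McShane/Kirszbraun step, and the splitting by the ratio $r$ are all the right instincts, but the heart of the argument --- producing a lower bound when both points are away from $X$ --- is missing.
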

  
 Our second remark is that Theorem \ref{ExtThm} seems to address the same phenomena as 
the embedding result by J. Seo \cite[Theorem  1.1]{Seo}.
It also seems very plausible that by applying \cite[Lemma 2.5]{Romney} and 
some standard arguments one can show that 
Theorem \ref{ExtThm}-emb is equivalent to Romney's version of Seo's result \cite[Theorem 2.3]{Romney}.
{\color{Black}
We have not verify carefully the validity of this approach and give a direct proof of Theorem \ref{ExtThm} instead.
}

\section{Proof of Theorem \ref{ExtThm}}

For a map between metric spaces $\phi:X \rightarrow Y$ we denote by $\Lip(\phi)$ the Lipschitz constant of $\phi$ 
{\color{Black} i.e.,
$$\Lip(\phi) = \sup_{x_1 \ne x_2 \in X} \frac{d_Y(\phi(x_1), \phi(x_2))}{d_X(x_1,x_2)} \in [0, \infty] ,$$
and by $\dist(\phi)$ its (bi-Lipschitz) distortion, which the infimum over those $D \in [1, \infty]$ such that 
there exists $s > 0$ satisfying 
$$sd_X(x_1,x_2) \le d_Y(\phi(x_1),\phi(x_2)) \le Dsd_X(x_1,x_2)$$
for all $x_1,x_2 \in X$.}

A metric space is said to have the doubling constant $\ll \in \N$ if for every $x \in X$ and $R > 0$ 
there exists $x_1,\dots,x_\ll \in X$ such that $B_R(x) \subset \cup_{i=1}^\ll B_{R/2}(x_i)$.

 We denote the standard Euclidean norm by $\vert \vert \cdot \vert \vert $. 

 {\color{Black}It is well known that partial Lipschitz maps to Euclidean spaces allow extensions to whole spaces.}
\begin{proposition}[McShane]\label{McShane}
Let $X$ be a metric space, $Y \subset X$, $L > 0$, $f:Y \rightarrow \R^n$ be a $L$-Lipschitz
map then there exists $F:X \rightarrow  \R^n$ such that $F$ is $\sqrt{n}L$-Lipschitz and 
$F|_{Y} = f$.
\end{proposition}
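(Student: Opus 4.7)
The plan is to reduce the problem to the classical scalar McShane extension by treating each coordinate of $f$ separately and then reassembling them. Writing $f = (f_1, \ldots, f_n)$, I first observe that each scalar component $f_i : Y \to \R$ is itself $L$-Lipschitz, since
$$|f_i(y_1) - f_i(y_2)| \le \|f(y_1) - f(y_2)\| \le L\, d_X(y_1, y_2).$$

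Next, I would extend each $f_i$ individually by the standard infimal-convolution formula
$$F_i(x) := \inf_{y \in Y} \bigl( f_i(y) + L\, d_X(x, y) \bigr).$$
A short check shows $F_i$ is real-valued (the infimum is bounded below by fixing a reference point $y_0 \in Y$ and using the Lipschitz bound on $f_i$), agrees with $f_i$ on $Y$ (use the Lipschitz condition to see that no $y \ne x$ can beat the value at $x$ itself when $x \in Y$), and is $L$-Lipschitz on $X$ (triangle inequality applied inside the infimum, swapping the roles of the two points).

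Finally I set $F := (F_1, \ldots, F_n) : X \to \R^n$. By construction $F|_Y = f$, and the Lipschitz constant follows from summing coordinates:
$$\|F(x_1) - F(x_2)\|^2 = \sum_{i=1}^n |F_i(x_1) - F_i(x_2)|^2 \le \sum_{i=1}^n L^2 d_X(x_1, x_2)^2 = n L^2 d_X(x_1, x_2)^2,$$
so $F$ is $\sqrt{n}\,L$-Lipschitz, as required.

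There is no real obstacle here; the proof is entirely classical. The only mildly non-obvious point is the $\sqrt{n}$ factor, which is simply the cost of handling coordinates independently rather than extending the vector-valued map directly (which would require, e.g., a Kirszbraun-type argument to preserve the Lipschitz constant exactly). Since the proposition as stated allows a $\sqrt{n}$ loss, the naive coordinatewise McShane construction suffices.
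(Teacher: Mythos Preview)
Your argument is correct and is precisely the classical coordinatewise McShane extension. The paper does not actually supply a proof of this proposition; it is stated as a known result attributed to McShane and used as a black box, so there is nothing further to compare.
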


Suppose that we start  from a bi-Lipschitz function $f$ and apply the previous 
proposition. The resulting map is not necessarily bi-Lipschitz.
 {\color{Black} But if we have 
two points $x_1, x_2 \in X$ such that both of them  are 
close to $Y$ and they are far from each other then $F$ will not distort distance between 
them too much.} The following lemma formalizes this statement.



{\color{Black}
\begin{lemma}\label{ExtLem1v2}
Let $X, Z$ be metric spaces, $Y \subset X$, $\phi:Y \rightarrow Z$ be a map having bi-Lipschitz distortion 
$dist(\phi) = D \in [1, \infty)$ for some $s > 0$ i.e., 
\begin{equation} sd(y_1,y_2) \le \vert \vert  \phi(y_1) - \phi(y_2) \vert \vert  \le Dsd(y_1, y_2),\label{biLip}\end{equation} 
for every $y_1, y_2 \in Y$. Let $\nu \ge 1$ and $\phi_1: X \rightarrow Z$ be a 
$Ds\nu$-Lipschitz extension of $\phi$. Then 
\begin{enumerate}
\item{for every $K \ge 0$ if $x_1, x_2 \in X$ satisfy 
$$d(x_1,x_2) > K\nu\max\{d(x_1,Y), d(x_2,Y)\}$$ then 
$\vert \vert  \phi_1(x_1) - \phi_1(x_2)\vert \vert  \ge s(1 - \frac{2}{K\nu} - \frac{2D}{K})d(x_1, x_2), \label{CoLipGen}$}
\item{in particular if $\nu = \sqrt{n}$ and $x_1, x_2 \in X$ satisfy 
$$d(x_1,x_2) > 5D\sqrt{n}\max\{d(x_1,Y), d(x_2,Y)\}$$ then 
$\vert \vert  \phi_1(x_1) - \phi_1(x_2)\vert \vert  \ge \frac{s}{5}d(x_1, x_2). \label{CoLip}$}
\end{enumerate}
\end{lemma}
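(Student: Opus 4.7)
The plan is to reduce the question to the bi-Lipschitz inequality on $Y$ by approximating $x_1,x_2$ with nearby points of $Y$, then control the incurred error using the Lipschitz bound on $\phi_1$.

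More concretely, for fixed $\ee > 0$ I would choose $y_1,y_2 \in Y$ with
$d(x_i,y_i) < d(x_i,Y) + \ee$. The triangle inequality gives
$d(y_1,y_2) \ge d(x_1,x_2) - d(x_1,y_1) - d(x_2,y_2)$, and the bi-Lipschitz hypothesis (\ref{biLip}) yields
$\vert\vert \phi(y_1)-\phi(y_2)\vert\vert \ge sd(y_1,y_2)$. On the other hand, since $\phi_1$ is $Ds\nu$-Lipschitz and extends $\phi$, one has
$\vert\vert \phi_1(x_i) - \phi(y_i)\vert\vert \le Ds\nu\, d(x_i,y_i)$ for $i=1,2$. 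Combining these via the reverse triangle inequality in $Z$,
\begin{equation*}
\vert\vert \phi_1(x_1)-\phi_1(x_2)\vert\vert \;\ge\; s\,d(x_1,x_2) \;-\; s(1+D\nu)\bigl[d(x_1,y_1)+d(x_2,y_2)\bigr].
\end{equation*}

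Letting $\ee \to 0$ and using $d(x_i,y_i) \to d(x_i,Y)$ together with the hypothesis $d(x_1,x_2) > K\nu\max\{d(x_1,Y),d(x_2,Y)\}$, the bracketed term is at most $\frac{2}{K\nu}d(x_1,x_2)$. Factoring out $d(x_1,x_2)$ and simplifying $\frac{2s(1+D\nu)}{K\nu} = \frac{2s}{K\nu} + \frac{2sD}{K}$ gives the bound of part (1). Part (2) is then an immediate substitution: setting $\nu = \sqrt{n}$ and $K = 5D$, the coefficient becomes $1 - \frac{2}{5D\sqrt{n}} - \frac{2}{5} \ge \frac{1}{5}$ since $D \ge 1$ and $n \ge 1$.

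I don't expect any real obstacle here; the only subtle point is making sure to handle the case where $d(x_i,Y)$ may not be attained by introducing the auxiliary $\ee$ and taking a limit, and then verifying that the elementary inequality $\frac{2(1+D\nu)}{K\nu} = \frac{2}{K\nu} + \frac{2D}{K}$ produces exactly the stated co-Lipschitz coefficient.
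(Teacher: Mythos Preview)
Your proposal is correct and follows essentially the same approach as the paper: approximate $x_1,x_2$ by nearby $y_1,y_2\in Y$, apply the lower bi-Lipschitz bound on $Y$ and the Lipschitz bound on $\phi_1$, and collect terms. The only cosmetic difference is that the paper picks a single auxiliary $\delta$ with $\max\{d(x_1,Y),d(x_2,Y)\}<\delta<\frac{d(x_1,x_2)}{K\nu}$ instead of letting an $\ee\to 0$, and part~(2) is obtained in both by the same substitution $\nu=\sqrt{n}$, $K=5D$.
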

}
\begin{proof}[Proof of (\ref{CoLipGen})]

Indeed, fix $x_1,x_2 \in X$ such that $d(x_1,x_2) > K\nu\max\{d(x_1,Y), d(x_2,Y)\}$.
Fix $\dd$ such that $\max\{d(x_1,Y), d(x_2,Y)\} < \dd < \frac{d(x_1,x_2)}{K\nu}$
 and $y_1, y_2 \in Y$ be such that $d(x_1, y_1), d(x_2, y_2) < \dd$.
 By the triangle inequality we have
 \begin{equation} \vert \vert  \phi_1(x_1) - \phi_1(x_2) \vert \vert  \ge \vert \vert  \phi_1(y_1) - \phi_1(y_2) \vert \vert  - \vert \vert   \phi_1(y_1) - \phi_1(x_1) \vert \vert  
 - \vert \vert   \phi_1(y_2) - \phi_1(x_2) \vert \vert  . \label{TIneq}\end{equation}
By the left part of (\ref{biLip}) we have 
$$\vert \vert  \phi_1(y_1) - \phi_1(y_2) \vert \vert  = \vert \vert  \phi(y_1) - \phi(y_2) \vert \vert   \ge s d(y_1, y_2).$$
And since $\phi_1$ is $Ds\nu$-Lipschitz we have 
$$\vert \vert   \phi_1(y_1) - \phi_1(x_1) \vert \vert   \le Ds\nu d(y_1, x_1) \le Ds\sqrt{n}\dd,$$
$$\vert \vert   \phi_1(y_2) - \phi_1(x_2) \vert \vert   \le Ds\nu d(y_2, x_2) \le Ds\sqrt{n}\dd.$$
Substituting last three inequalities into (\ref{TIneq}) provides 
$$ \vert \vert  \phi_1(x_1) - \phi_1(x_2) \vert \vert  \ge  s d(y_1, y_2) - 2Ds\nu\dd. \ge  $$
$$ \ge  s\big(d(x_1,x_2) - d(x_1, y_1) - d(x_2, y_2)\big) -  2Ds\nu\dd \ge$$
$$ \ge s\big(d(x_1,x_2) - 2\dd - 2D\nu\dd\big).$$
Since $ \dd < \frac{d(x_1,x_2)}{K\nu}$ we have 
$$ \vert \vert  \phi_1(x_1) - \phi_1(x_2) \vert \vert   \ge sd(x_1,x_2)(1 - \frac{2}{K\nu} - \frac{2D\nu}{K\nu}).$$
And we are done.
\end{proof}
\begin{proof}[Proof of (\ref{CoLip})] We substitute $\nu = \sqrt{n}$ and $K = 5D$ into the last inequality 
of the previous proof. This provides
$$\vert \vert  \phi_1(x_1) - \phi_1(x_2) \vert \vert   \ge sd(x_1,x_2)(1 - \frac{2}{K\nu} - \frac{2D}{K}) \ge \frac{s}{5}d(x_1,x_2).$$
\end{proof}


\begin{lemma}\label{ExtLem2}
Let $X$ be a metric space with the doubling constant $\ll \in \N$, $\GG \ge 1$, $f \ge 0$ be a $\GG$-Lipschitz function on X.
Suppose that there are $n \in \N$ and $D \ge 1$ such that for every $x \in X$ there exists an embedding of $B_{f(x)}(x)$ in $\R^n$
with distortion less then $D$. Then for every $\zz \ge 1$ there exist
$N = N(n, D, \ll, \zz, \GG)$ and
 $\Phi:X \rightarrow \R^N$ such that 
\begin{enumerate}
\item{if $f(x) = 0$ then $\Phi(x) = 0$,\label{CZero}}
\item{$\Lip(\Phi)$ is bounded by some function of $n$, $D$, $\ll$, $\zz$ and $\GG$,\label{CDepends}}
\item{if $x_1, x_2 \in X$ satisfy $d(x_1,x_2) \le \zz \max\{f(x_1), f(x_2)\}$ then 
$\vert \vert  \Phi(x_1) - \Phi(x_2)\vert \vert  \ge  Kd(x_1,x_2)$, for $K =   \frac{9\GG}{40\zz}$. \label{CoLip2}}
\end{enumerate}
\end{lemma}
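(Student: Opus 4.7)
The plan is a multi-scale Whitney-type construction in the spirit of the Assouad embedding theorem, but using the genuine local bi-Lipschitz embeddings $\psi_p$ instead of mere distance coordinates. Fix a ratio $\mu>1$ and dyadic scales $r_k := \mu^k$. At scale $k$, take a maximal $r_k$-separated subset $P_k \subset \{x \in X : f(x) \ge \sigma r_k\}$, where $\sigma$ is a constant to be fixed depending on $\zz$, $\GG$. For each $p \in P_k$ the hypothesis provides an embedding $\psi_p: B_{f(p)}(p) \to \R^n$ of distortion $\le D$; normalize so $\psi_p(p)=0$, $\Lip(\psi_p) \le 1$ and $\| \psi_p(x)-\psi_p(y) \| \ge d(x,y)/D$. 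Multiply by a Lipschitz bump $\chi_p$ equal to $1$ on $B_{r_k}(p)$, vanishing off $B_{c r_k}(p)$ with $c < \sigma$, and with Lipschitz constant $O(1/r_k)$.

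By the doubling property each $P_k$ admits a coloring with $\kappa_0 = \kappa_0(\ll,c)$ colors such that same-colored points are more than $2 c r_k$ apart, so the bump supports of same-colored points are disjoint. For each color $j$ set
\[ F_{k,j}(x) := \sum_{p \in P_k,\ \mathrm{col}(p) = j} \chi_p(x)\psi_p(x) \in \R^n, \]
a sum with at most one nonzero term per $x$, whose Lipschitz constant is bounded in terms of $D$, $\sigma$, $c$. To collapse the infinite family of scales into a finite-dimensional target I use a Lipschitz scale-selector $\eta_k: \R \to [0,1]$ equal to $1$ on $[\sigma r_k,\sigma \mu r_k]$ and to $0$ off $[\sigma r_k/2,\,2\sigma\mu r_k]$, so that at each $x$ only $O(1)$ indices $k$ have $\eta_k(f(x))\ne 0$. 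For a large enough integer period $T$ set
\[ G_{r,j}(x) := \sum_{k\,\equiv\, r\,(\mathrm{mod}\,T)} \eta_k(f(x))\,F_{k,j}(x), \]
which again has at most one nonzero term per $x$, and let $\Phi := \bigoplus_{r,j} G_{r,j} : X \to \R^N$ with $N = T \kappa_0 n$. All of $T$, $\kappa_0$ depend only on the stated parameters. Property~(\ref{CZero}) is immediate since $\eta_k(0) = 0$, and property~(\ref{CDepends}) reduces to bounding the Lipschitz constant of a single active term $\eta_k(f)\,F_{k,j}$, a routine product-rule estimate combining $\Lip(\chi_p) = O(1/r_k)$, $\|\psi_p\|_\infty = O(r_k)$ on the bump support, and $\Lip(\eta_k \circ f) = O(\GG/r_k)$.

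The heart of the proof, and where I expect the main obstacle, is property~(\ref{CoLip2}). Given $x_1, x_2$ with $d(x_1,x_2) \le \zz \max\{f(x_1),f(x_2)\}$ (WLOG $f(x_1) \ge f(x_2)$), the natural choice is to pick $k^*$ with $f(x_1) \in [\sigma r_{k^*},\sigma \mu r_{k^*}]$ and the net point $p \in P_{k^*}$ closest to $x_1$. With $\sigma$ large enough relative to $\zz \mu$, both $x_1, x_2$ lie inside the bump support $B_{c r_{k^*}}(p)$ and inside the embedding domain $B_{f(p)}(p)$; the bi-Lipschitz property of $\psi_p$ then yields $\|F_{k^*,j}(x_1) - F_{k^*,j}(x_2)\| \ge d(x_1,x_2)/D$ before the $\eta_{k^*}$-cutoff is applied. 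The difficulty is that the hypothesis allows $f(x_2)$ to be much smaller than $f(x_1)$ when $\zz$ is large, so $\eta_{k^*}(f(x_2))$ may drop below $1$, producing only a partial signal at $x_2$; balancing this loss against the explicit parameters $\sigma$, $\mu$, $c$ and verifying the precise constant $K = 9\GG/(40\zz)$ rather than just some unspecified $K>0$ is the careful case analysis I would grind through in the full write-up.
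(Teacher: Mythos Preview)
Your construction has a genuine gap in the co-Lipschitz step. The claim that ``with $\sigma$ large enough relative to $\zz\mu$, both $x_1,x_2$ lie inside the bump support $B_{cr_{k^*}}(p)$'' is impossible under your parameter conventions. With $k^*$ chosen so that $f(x_1)\in[\sigma r_{k^*},\sigma\mu r_{k^*}]$ and $d(x_1,p)\le r_{k^*}$, the hypothesis only gives $d(x_1,x_2)\le\zz f(x_1)\le\zz\sigma\mu\,r_{k^*}$, hence $d(x_2,p)\le(1+\zz\sigma\mu)r_{k^*}$. Containing $x_2$ in the bump forces $c\ge 1+\zz\sigma\mu$, contradicting $c<\sigma$ for every $\zz\ge1$, $\mu>1$. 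More fundamentally, no choice of parameters can make a single local chart $\psi_p$ see both points: the chart is defined only on $B_{f(p)}(p)$ with $f(p)\approx f(x_1)$, while $d(x_1,x_2)$ is allowed to be as large as $\zz f(x_1)$. So for the ``far'' regime $d(x_1,x_2)\gtrsim f(x_1)$ your bi-Lipschitz argument simply does not apply, and the difficulty is not merely that $\eta_{k^*}(f(x_2))$ drops.

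The paper closes this gap by a device you are missing: each local map is lifted into $\R^{n+1}$ so that on its core it takes values in the affine hyperplane $\{y_1+\dots+y_{n+1}=2^k\sqrt{n+1}\}$, in particular $\lVert P_x(z)\rVert\ge 2^{k-1}$ for every $z$ in the core regardless of how close $z$ is to the centre $x$. The co-Lipschitz bound is then proved in two cases. When $d(z_1,z_2)<\frac{4}{5}2^{k-1}$ both points lie in one core and the bi-Lipschitz property of $P_x$ applies, as in your sketch. When $d(z_1,z_2)\ge\frac{4}{5}2^{k-1}$ one instead drops to scale $k-1$, uses $\lVert P_x(z_1)\rVert\ge 2^{k-1}$ from the hyperplane lift, and shows that the contribution at $z_2$ in the same $(\,\overline{k-1},\chi(x)\,)$ block is at most $\tfrac{1}{10}2^{k-1}$; here the colouring and the mod-$M$ grouping of scales (with the nets living in the annuli $\{2^k\le f\le 2^{k+2}\}$ and the $1$-Lipschitz property of $f$ replacing your selector $\eta_k$) rule out interference from other summands. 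Adding a single extra coordinate carrying a bump of height $\sim r_k$ to your $F_{k,j}$ would give you the same mechanism; without it, the far case cannot be handled and the proposal does not yet prove (\ref{CoLip2}).
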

\begin{proof}
We will only deal with the case $\GG = 1$ and the general case {\color{Black}follows by  rescalings} of {\color{Black} metrics}. 
Most of ideas of the following proof came from the proof of Assouad's embedding theorem (see \cite{Assouad, Hei, NN10}). 

\textit{Step 1: maps $P_x$.} On this step we introduce maps $P_x$ which will be used 
as building blocks for $\Phi$.

A subset of a metric space is said to be \emph{$r$-separated}
if every pair of distinct points in that set is of distance $\ge r$.
For every $k \in \Z$ we fix $N_k$ to be a maximal $\frac{2^k}{10}$-separated subset in $\{x \in X\vert 2^k \le f(x) \le 2^{k+2}\}$.
In the later text we assume that for $k \neq l \in \Z$ we have $N_k \cap N_l = \emptyset$. The proof of the general case is 
the same but requires more messy notation.

For $x \in N_k$ we construct $P_x:X \rightarrow \R^{n+1}$ such that 
\begin{enumerate}
\item{${P_x}(B_{2^{k-1}}(x)) \subset \{(y_1,\dots,y_{n+1}) \in \R^{n+1} \vert \vert y_1 + \dots + y_{n+1} = 2^k \sqrt{n+1}\}$, \label{PxInCS}}
\item{$P_x(x) = \frac{2^k}{\sqrt{n+1}}(1,\dots,1),$}
\item{${P_x}|_{B_{2^{k-1}}(x)}$ has bi-Lipshitz distortion $D$ for $s = 1$, \label{PxBiLip}}
\item{$P_x(X \setminus B_{\frac{11}{10}2^{k-1}}(x)) = 0$. \label{PxZero}}
\item{$P_x$ is $40D\sqrt{n+1}$-Lipschitz. \label{PxLip}}
\end{enumerate}
Construction. First we define $P_x$ only on $B_{2^{k-1}}(x)$ and in a way 
that it satisfies (1-3) which can be done by assumptions of the lemma.  Then we define $P_x$ on $X \setminus B_{\frac{11}{10}2^{k-1}}(x)$
by zero. For every $z \in B_{2^{k-1}}(x)$ we have 
$$\vert \vert  P_x(z) \vert \vert  \le  
\vert \vert  P_x(x) \vert \vert  + \vert \vert  P_x(z) - P_x(x)\vert \vert  \le 2^k + (D2^{k-1}) \le 2^{k+1}D.$$
This implies that $P_x$ is $40D$-Lipschitz on $B_{2^{k-1}}(x) \cup (X \setminus B_{\frac{11}{10}2^{k-1}}(x))$. 
By  Proposition \ref{McShane} $P_x$  can be extended  to the whole $X$ in a way that the resulting map is  $40D\sqrt{n + 1}$-Lipschitz.

\textit{Step 2: coloring  $\chi$.} On this step we introduce coloring $\chi$ of $\cup_{k = -\infty}^{\infty}N_k$.
It will assist us in our mission of constructing a finite dimensional map $\Phi$ from the countable 
set of maps $\{P_x\}_{x \in \cup_{k = -\infty}^{\infty}N_k}$.
 
For $a, R > 0$ and a metric space having doubling constant $\ll$ we have that the size of 
an $a$-separated subset of a ball of  radius $R$ does not exceed $\ll^{2+ \log_2(\frac{R}{a})}$. 
Thus if we take $j = \ll^{2 + \log_2(100\zz)}$  then for every $x \in X$  the size of 
 $\{y \in N_k\vert  d(x,y) < 10\cdot2^k \zz\}$ is less or equal to $j$. We fix a coloring 
 $\chi:\cup_{k = -\infty}^{\infty}N_k \rightarrow \{1,\dots,j\}$ such that for every $k \in \Z$ and every $x,y \in N_k$ equality
 $\chi(x) = \chi(y)$ implies
 \begin{equation} d(x,y) \ge 10\cdot2^k \zz. \label{MAFar}\end{equation}
 
 \textit{Step 3: Construction of $\Phi$.} We will start from the set of maps 
 $\{P_x\}_{x \in \cup_{k = -\infty}^{\infty}N_k}$. We are going to split them into finite amount of groups  in such a manner 
 that supports of elements of the same group are far from each other (see Claim \ref{TheClaim}).
 From each group we will get a map $Q_{\wt k, \wt j }$ by simply taking the sum of all elements.
 Finally, we will bundle all those maps $Q_{\wt k, \wt j }$ together to get $\Phi$. 
 More precisely we do the following.  
  
Let $M = \max\{8, \log_2(440D\sqrt{n+1})\}$. For in $a \in Z$ we denote 
$$\wb a = \{b \in \Z \vert \vert b = a\text{ mod }M\}.$$  
For $\wt k = \wb 1,\dots,\wb {M}$ and $1 \le \wt j  \le j$ we define $Q_{\wt k, \wt j }:X \rightarrow \R^{n+1}$ by 
$$Q_{\wt k, \wt j }(z) = \sum_{k \in \wt k}\sum_{x \in N_k, \chi(x) = \wt j}P_x(z).$$
We claim that a map $\Phi:X \rightarrow \R^N = \R^{(n+1)Mj}$ defined by 
$$\Phi(z) =  \Big(Q_{\wt k, \wt j }(z)\Big)_{\wt k = \wb 1,\dots,\wb M, 1 \le \wt j \le j}.$$
satisfies conditions (\ref{CZero} - \ref{CoLip2}).

 \textit{Step 3: proof that (\ref{CZero}) is satisfied.} 

 Fix $x \in N_k$, we have $f(x) \ge 2^k$ and $f$  is $1$-Lipschitz
thus $d(x,\{p \in X  \vert f(p) = 0\}) \ge 2^k$. Since $P_x(X \setminus B_{\frac{11}{10}2^{k-1}}(x)) = 0$ we conclude that 
$P_x(\{p \in X\vert  f(p) = 0\}) = 0$. Thus $\Phi(\{p \in X \vert f(p) = 0\})  = 0$ too.

 \textit{Step 4: Claim \ref{TheClaim}.} 
The following claim says that supports of 
$P_{x_1}$, $P_{x_2}$ which are summands in the same $Q_{\wt k, \wt j }$ are far from each other. 
This claim will we useful for proving both (\ref{CDepends}) and (\ref{CoLip2}).

\begin{claim}\label{TheClaim}
Let $k_1, k_2 \in \Z$ be such that 
$k_1 = k_2 \mod M$, $x_1 \in N_{k_1}$, $x_2 \in N_{k_2}$, $z_1 \in \Spt(P_{x_1})$, $z_2 \in \Spt(P_{x_2})$. 
If $\chi(x_1) = \chi(x_2)$ and $x_1 \neq x_2$ 
then
$$d(z_1,z_2) \ge \frac{2}{5}2^{\max\{k_1,k_2\}}.$$
\end{claim}
\begin{proof}
The case $k_1 = k_2$ follows from (\ref{MAFar}). In the case $k_1 \neq k_2$ we can assume $k_1 \ge k_2 + M$ without loss
of generality. We have that $f(x_1) \ge 2^{k_1}$. Since $f$ is $1$-Lipschitz and $\Spt(P_{x_1}) \in \wb{B_{\frac{11}{10}2^{k_1-1}}(x_1)}$
we conclude that $f(z_1) \ge \frac{9}{10}2^{k_1-1}$. The similar argument shows that $f(z_2) \le 2^{k_2+3} \le 2^{k_1 - (M - 3)}$.
Thus $f(z_1) - f(z_2) \ge \frac{2}{5}2^{k_1}$. Once again we apply that $f$ is $1$-Lipschitz and obtain 
$d(z_1, z_2) \ge \frac{2}{5}2^{k_1}$. 
\end{proof}
\textit{Step 5: proof of (\ref{CDepends}).} 
  We are going to show that 
$$Q_{\wt k, \wt j } = \sum_{k \in \wt k}\sum_{x \in N_k, \chi(x) = \wt j}P_x$$  
  is Lipschitz for every $\wt k = \wb 1,\dots,\wb {M}$ and $1 \le \wt j  \le j$.
By Claim \ref{TheClaim} we know that supports of functions $P_x$ in the sum do not intersect each other.
Fix $z_1, z_2 \in X$. If there exists $k \in \wt k$ and $x_1 \in N_k$ such that  $\chi(x_1) = \wt j$ and $z_1, z_2 \in \Spt(P_{x_1})$ then 
we have
 $$\vert \vert  Q_{\wt k, \wt j }(z_1) - Q_{\wt k, \wt j }(z_2) \vert \vert \vert
 = \vert \vert  P_{x_1}(z_1) - P_{x_1}(z_2) \vert \vert  \le 
40D\sqrt{n+1}d(z_1,z_2)$$
by (\ref{PxLip}).  The same is true if at least one  of  points $z_1, z_2 \in X$ is outside of supports of all functions $P_x$. 

The remaining case {\color{Black} is if} there exist $k_1, k_2 \in \wt k$ and different $x_1 \in N_{k_1}, x_2 \in N_{k_2}$ such that 
$\chi(x_1) = \chi(x_2) = \wt j$ and $z_1 \in \Spt(P_{x_1})$, $z_2 \in \Spt(P_{x_2})$. In this case by the claim we have 
\begin{equation} d(z_1,z_2) \ge \frac{2}{5}2^{\max\{k_1,k_2\}}. \label{PxFar} \end{equation}
On the other hand since each $P_{x_i}$ is  $40D\sqrt{n + 1}$-Lipschitz and $\Spt(P_{x_i})  \subset \wb{B_{\frac{11}{10}2^{k_i-1}}(x)}$ we have 
$$\vert \vert  P_{x_i}(z_i) \vert \vert  \le  22D\sqrt{n + 1}2^{k_i},$$
for $i = 1,2.$
Thus,
$$\vert \vert  Q_{\wt k, \wt j }(z_1) - Q_{\wt k, \wt j }(z_2) \vert \vert  = \vert \vert  P_{x_1}(z_1) - P_{x_2}(z_2) \vert \vert  \le 44D\sqrt{n + 1}2^{\max\{k_1,k_2\}}.$$
Combining this with (\ref{PxFar}) provides 
$$\vert \vert  Q_{\wt k, \wt j }(z_1) - Q_{\wt k, \wt j }(z_2) \vert \vert   \le  110D\sqrt{n + 1} d(z_1,z_2).$$
Thus we conclude that $Q_{\wt k, \wt j }$ is $110D\sqrt{n + 1}$-Lipschitz   for every $\wt k = \wb 1,\dots,\wb {M}$ and $1 \le \wt j  \le j$.
And $\Phi$ is  $110D\sqrt{(n + 1)Mj}$-Lipschitz.

\textit{Step 6: proof of (\ref{CoLip2}).} 
 Let $z_1, z_2 \in X$ be such that 
$d(z_1,z_2) \le \zz \max\{f(z_1), f(z_2)\}$.  We have to show that 
$$\vert \vert  \Phi(z_1) - \Phi(z_2)\vert \vert  \ge Kd(z_1,z_2).$$

Without loss of generality we assume that $f(z_1) \ge f(z_2)$. We fix $k \in \N$ such that $f(z_1) \in [2^k, 2^{k+1})$.
  
The first case is $d(z_1, z_2) < \frac{4}{5}2^{k - 1}$. By the construction of $N_k$ there exists $x \in N_k$ such that 
$d(z_1, x) < \frac{1}{5}2^{k-1}$. Then we also have $d(z_2, x) <  2^{k-1}$ by the triangle inequality. 
Thus by the property (\ref{PxBiLip}) from the construction of $P_x$ we have 
$$\vert \vert  P_x(z_1) - P_x(z_2)\vert \vert  \ge  d(z_1,z_2).$$
By the Claim \ref{TheClaim} we conclude that 
$$\vert \vert  Q_{\wb k, \chi(x)}(z_1) - Q_{\wb k, \chi(x)}(z_2)\vert \vert  \ge d(z_1,z_2).$$
And thus 
$$\vert \vert  \Phi(z_1) - \Phi(z_2)\vert \vert  \ge d(z_1,z_2)  \ge Kd(z_1,z_2).$$

Now lets consider the case $d(z_1, z_2) \ge \frac{4}{5}2^{k - 1}$.  By the construction of $N_{k-1}$ there exists $x \in N_{k-1}$ such that 
$d(z_1, x) < \frac{1}{5}2^{k-2}$. By the triangle inequality we conclude that
 $d(z_2,x) \ge \frac{7}{5}2^{k-2}$.  
 By the property (\ref{PxInCS}) from the construction of $P_x$ we have 
 $$\vert \vert  P_x(z_1) \vert \vert  \ge 2^{k-1}.$$
And thus 
$$\vert \vert  Q_{\wb{k - 1}, \chi(x)}(z_1) \vert \vert  \ge 2^{k-1}.$$
Now we are going to show that
 $$\vert \vert  Q_{\wb{k - 1}, \chi(x)}(z_2) \vert \vert  = 
\vert \vert  \sum_{k' - 1  \in \wb{k - 1}}\sum_{x' \in N_{k' - 1}, \chi(x')  = \chi(x)}P_{x'}(z_2) \vert \vert  \le \frac{1}{10} 2^{k-1}.$$
By the Claim \ref{TheClaim} we know that at most one $P_{x'}(z_2)$ in the previous sum is non zero.
The case when all summands are zero is trivial. Thus we assume that non-zero summand exist 
and denote by $k'$ and $x'$ corresponding indexes. First lets note that $k' \le k$. Indeed, suppose that $k' \ge k+M$.
Then $f(x') \ge 2^{k' - 1}$ and since $f$ is $1$-Lipschitz and
  $\Spt(P_{x'})  \subset \wb{B_{\frac{11}{10}2^{k'-2}}(x')}$
  we conclude that $f(z_2) \ge \frac{9}{10}2^{k'-2} > 2^{k+1} \ge f(z_1)$. Which contracts 
  the assumption that $f(z_2) \le f(z_1)$. 
 
 Next we are going to show that $k' \neq k$. Once again we argue by contradiction and suppose that 
 $k' = k$. Combining $d(z_2,x) \ge \frac{7}{5}2^{k-2}$ and the property (\ref{PxZero}) of $P_x$ we obtain that 
$$P_x(z_2) = 0.$$ Thus $x' \neq x$.
 Then by the construction of $N_{k-1}$ we have that
 $$d(x,x') \ge 10\cdot 2^{k-1}\zz.$$
Since $z_1 \in \Spt(P_{x})  \subset \wb{B_{\frac{11}{10}2^{k-2}}(x)}$ and 
$z_2 \in \Spt(P_{x'})  \subset \wb{B_{\frac{11}{10}2^{k-2}}(x')}$ the previous inequality implies 
$$d(z_1,z_2) \ge 8\cdot 2^{k-1}\zz.$$
which contradicts the assumption that 
$$d(z_1,z_2) \le \zz \max\{f(z_1), f(z_2)\} \le 2^{k+1} \zz.$$
We conclude that the only possible case is that $k' \le k - M$. Thus we have 
$$\vert \vert  P_{x'}(z_2) \vert \vert  \le \vert \vert  P_{x'}(x') \vert \vert  + \Lip(P_{x'})\frac{11}{10}2^{k' - 2} \le$$
$$ \le 2^{k - 1 - M}  + 40D\sqrt{n+1}\frac{11}{10}2^{k - 2 - M} \le$$
$$\le 2^{k - 1 - M} + 44D\sqrt{n+1}2^{k - 2 - M} = 2^{k-1}2^{-M}44D\sqrt{n+1} \le 2^{k-1}\frac{1}{10},$$
where the last inequality follows from the definition of $M$.
Thus we have that  
$$\vert \vert  Q_{\wb{k - 1}, \chi(x)}(z_2) \vert \vert  \le \frac{1}{10}2^{k-1}.$$
Which implies 
$$\vert \vert  \Phi(z_1) - \Phi(z_2)\vert \vert  \ge \vert \vert  Q_{\wb{k - 1}, \chi(x)}(z_1) -  Q_{\wb{k - 1}, \chi(x)}(z_2)\vert \vert  \ge \frac{9}{10}2^{k-1}.$$
In combination with 
$$d(z_1,z_2) \le \zz \max\{f(z_1), f(z_2)\} \le \zz  2^{k+1}$$
the previous inequality implies
$$\vert \vert  \Phi(z_1) - \Phi(z_2)\vert \vert  \ge  \frac{9}{40\zz} d(z_1,z_2) = Kd(z_1,z_2).$$
\end{proof}

\begin{proof}[Proof of Theorem \ref{ExtThm}]
Suppose that we are in conditions of Theorem \ref{ExtThm}. Without loss of generality 
we can assume that 
$$d(y_1 ,y_2) \le \vert \vert \phi(y_1) - \phi(y_1) \vert \vert \le \dist(\phi) d(y_1 ,y_2).$$
Let $\phi_1$ be the extension of $\phi$ provided by Proposition \ref{McShane}.  We define 
$f:X \rightarrow [0, \infty)$ by 
$$f(x) = \tt \dist(x,Y).$$
And we take $\zz = \frac{5\dist(\phi)\sqrt{n}}{\tt}$.  Let $\Phi:X \rightarrow \R^N$  be the 
map provided by Lemma \ref{ExtLem2} for those $f, \zz$ and $\GG = 1$. 
We claim that the map $\wt \phi = (\phi_1, \Phi)$ satisfies the conclusion of the Theorem \ref{ExtThm}. 

Indeed, (\ref{Ext}) from the conclusion of Theorem \ref{ExtThm}
 follows from (\ref{CZero}) from the  conclusion of Lemma \ref{ExtLem2}. 
Once again $N$ comes from Lemma \ref{ExtLem2} with the required upper bound
which depends only on  $n$, $\wb n$, $\dist(\phi)$,  $D$, $\tt$ and $\ll$.
Note that both $\phi_1$ are $\Phi$ are Lipschitz and their Lipschitz 
constants depend only  $n$, $\wb n$, $\dist(\phi)$,  $D$, $\tt$ and $\ll$.
Thus, the same is true for $\wt \phi$.
 
Finally we have to provide the inequality
$$\vert \vert  \wt \phi(x_1) - \wt \phi(x_2) \vert \vert  \ge C(n, \wb n, \dist(\phi),  D, \tt, \ll)d(x_1,x_2),$$  
for every $x_1, x_2 \in X$.
 
In the case 
$$d(x_1,x_2) > 5\dist(\phi)\sqrt{n} \max\{d(x_1 ,Y ), d(x_2 ,Y )\},$$
we have 
$$\vert \vert  \wt \phi(x_1) - \wt \phi(x_2) \vert \vert \ge 
\vert \vert   \phi_1(x_1) -  \phi_1(x_2) \vert \vert \ge \frac{1}{5} d(x_1, x_2),$$
where the last inequality follows from Lemma \ref{ExtLem2}(\ref{CoLip}). 

In the case
$$d(x_1,x_2) \le 5\dist(\phi)\sqrt{n} \max\{d(x_1 ,Y ),d(x_2 ,Y )\} = \zz \max\{f(x_1),f(x_2)\},$$
we have 
$$\vert \vert  \wt \phi(x_1) - \wt \phi(x_2) \vert \vert \ge 
\vert \vert   \Phi(x_1) -  \Phi(x_2) \vert \vert \ge 
\frac{9}{40 \zz}d(x_1, x_2) = 
\frac{9\tt}{200\dist(\phi)\sqrt{n}}d(x_1, x_2),$$
where the last inequality comes from Lemma \ref{ExtLem2}(\ref{CoLip2}).
\end{proof}

\section{Proof of Theorem \ref{EmbThm}}

\begin{definition}[Small rough angle condition; $\SRA(\aa)$]\label{def:sra}
Let $(X,d)$ be a metric space and $0 < \aa < 1$.
We say that $X$ satisfies the \emph{$\SRA(\aa)$-condition} if,
for every $x,y,z \in X$, we have 
\begin{equation}
d(x,y) \le \max\{d(x,z) + \aa d(z,y), \aa d(x,z) + d(z,y)\}. \label{SRA-ineq}
\end{equation}
\end{definition}

\begin{lemma}\label{LocEmb}
Let  $0 < \aa < 1$ and $X$ be a metric space such that it is free of $k$-point $\SRA(\aa)$-subspaces.
Suppose that $\{x_1,\dots,x_{k-1}\} \subset X$ taken 
with induced metrics satisfies $\SRA(\frac{\aa}{2})$-condition.
Then $B_{\frac{\aa R}{6}}(x_1) \subset X$ could be embedded into $\R^{k-2}$ with distortion $ \frac{\sqrt{k-2}}{\aa}$,
where $R = \min_{1 \le i < j \le k - 1}{d(x_i,x_j)}$.
\end{lemma}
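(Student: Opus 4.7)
The plan is to use the distance-witness map
$$\Phi : B_{\aa R/6}(x_1) \to \R^{k-2}, \qquad \Phi(y) = (d(y, x_2), \dots, d(y, x_{k-1})).$$
Each coordinate of $\Phi$ is $1$-Lipschitz by the triangle inequality, so $\Lip(\Phi) \le \sqrt{k-2}$ for free; the content of the lemma reduces to proving the matching lower bound $\|\Phi(y) - \Phi(y')\| \ge \aa\, d(y, y')$ for every pair of distinct points $y, y' \in B_{\aa R/6}(x_1)$, which gives distortion $\sqrt{k-2}/\aa$.

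To extract this lower bound I would apply the $\SRA(\aa)$-free hypothesis to the set $S = \{y, y', x_2, \dots, x_{k-1}\}$. Because $d(y, x_i) \ge d(x_1, x_i) - d(x_1, y) \ge R - \aa R / 6 > 0$ for $i \ge 2$ (and similarly for $y'$), while $d(x_i, x_j) \ge R$ for distinct $i, j \ge 2$, the set $S$ has exactly $k$ points. The hypothesis then produces $a, c, b \in S$ with $d(a, b) > \max\{d(a, c) + \aa d(c, b),\ \aa d(a, c) + d(c, b)\}$. My aim is to show that the only configuration of this bad triple compatible with the assumptions is $\{a, b, c\} = \{y, y', x_l\}$ with far pair $\{a, b\} = \{y, x_l\}$ (or symmetrically $\{y', x_l\}$): the second clause of the bad triple inequality then gives directly $|d(y, x_l) - d(y', x_l)| > \aa \, d(y, y')$, hence $\|\Phi(y) - \Phi(y')\| \ge |d(y, x_l) - d(y', x_l)| > \aa\, d(y, y')$, as desired.

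The three unwanted configurations of the bad triple are excluded as follows. If $\{a, b, c\} \subset \{x_2, \dots, x_{k-1}\}$, the inherited $\SRA(\aa/2)$-condition on this probe set gives a strictly sharper upper bound on $d(a, b)$ than the one the bad triple violates, yielding an immediate contradiction since $\aa/2 < \aa$. If $\{a, b, c\} = \{y, y', x_l\}$ but the far pair is $\{a, b\} = \{y, y'\}$, then $d(a, b) > d(a, c)$ while $d(y, y') \le \aa R / 3 < R - \aa R / 6 \le d(y, x_l)$, contradiction. The delicate case is when the triple has shape $\{y, x_l, x_m\}$ (symmetrically $\{y', x_l, x_m\}$): using the uniform estimate $|d(y, x_i) - d(x_1, x_i)| \le \aa R / 6$, I would translate each of the two bad triple inequalities into an inequality for the probe triple $\{x_1, x_l, x_m\}$ with parameter $\aa$, at the cost of an additive error of at most $(1+\aa)\aa R / 6$. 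Confronted with the $\SRA(\aa/2)$-inequality on $\{x_1, x_l, x_m\}$ supplied by the hypothesis, the slack between $\aa$ and $\aa/2$ absorbs this error and forces either $d(x_l, x_m) < 2R/3$ or $d(x_1, x_m) < (1+\aa) R/3$, either of which contradicts $R \le \min_{i<j} d(x_i, x_j)$.

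The main obstacle is precisely this last bookkeeping step: one has to verify that the radius $\aa R / 6$ fixed in the statement is exactly tuned so that the additive perturbation coming from the ball sits below the slack between $\aa$ and $\aa/2$ in the hypothesis, producing a strict contradiction with the definition of $R$. Everything else (the upper Lipschitz bound, the point-count of $S$, and the first two case exclusions) is routine.
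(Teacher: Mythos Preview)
Your proposal is correct and is essentially the contrapositive of the paper's argument. Both proofs use the same map $\Phi(y)=(d(y,x_2),\dots,d(y,x_{k-1}))$, apply the $\SRA(\aa)$-free hypothesis to the same $k$-point set $\{y,y',x_2,\dots,x_{k-1}\}$, and run through the same three-way case analysis on the shape of the relevant triple; the paper assumes all coordinates fail the lower bound and verifies that every triple then satisfies the $\SRA(\aa)$-inequality (contradiction), whereas you take the violating triple guaranteed by the hypothesis and exclude the unwanted shapes directly. The perturbation bookkeeping you flag as the ``main obstacle'' is exactly the content of the paper's Case~B, where the $\aa R/6$ radius is chosen so that replacing $y$ by $x_1$ costs at most $2\cdot\frac{\aa R}{6}$ additively while the gap between $\aa$ and $\aa/2$ yields slack $\frac{\aa}{2}\cdot R$, and indeed $\frac{\aa R}{3}<\frac{\aa R}{2}$.
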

\begin{proof}
We claim that a map $\phi:X \rightarrow \R^{k-2}$ given by 
$$\phi_i(y) = \dist(y, x_{i-1})$$
does the trick. Clearly $\phi$ is a $\sqrt{k-2}$-Lipschitz map. It suffices to show that 
for every $y_1, y_2 \in B_{\frac{\aa R}{6}}(x_1)$ we have 
$$\vert \vert  \phi(y_1) - \phi(y_2) \vert \vert  \ge \aa d(y_1, y_2).$$
Fix $y_1, y_2$. We claim that there exists $1 \le m \le k-2$ such that 
$$\vert   \phi_m(y_1) - \phi_m(y_2) \vert   \ge \aa d(y_1, y_2).$$
By contradiction suppose that for every $1 \le m \le k-2$
\begin{equation}  \vert  \phi_m(y_1) - \phi_m(y_2) \vert    < \aa d(y_1, y_2). \label{NotDist}\end{equation}
We claim that in this case $\{y_1,y_2,x_2,\dots, x_{k-1}\}$ satisfies $\SRA(\aa)$-condition.
We have to check that (\ref{SRA-ineq}) is satisfied for every 
$x, y, z \in \{y_1,y_2,x_2,\dots, x_{k-1}\}$. We define a function 
$\wb \cdot :\{y_1,y_2,x_2,\dots, x_{k-1}\} \rightarrow \{x_1, x_2,\dots,x_{k-1}\}$ 
by 
$$\wb x_i = x_i\textit{, for $2 \le i \le k-1$,}$$
$$\wb y_1 = \wb y_2 = x_1.$$

\textit{Case A:} $x,y,z \subset  \{x_2,\dots, x_{k-1}\}$. This case is trivial since 
$\{x_1,\dots,x_{k-1}\} \subset X$ satisfies $\SRA(\frac{\aa}{2})$-condition.

\text{Case B:} $\{x,y,z\} =  \{y_1, x_i, x_j\}$. Since $\{x_1,\dots,x_{k-1}\} \subset X$ 
satisfies $\SRA(\frac{\aa}{2})$-condition. We have 
\begin{equation}
d(\wb x,\wb y) \le \max\{d(\wb x,\wb z) + \frac{\aa}{2} d(\wb z,\wb y), \frac{\aa}{2} d(\wb x,\wb z)
 + d(\wb z,\wb y)\}.
 \label{LocEmbEQ1}
 \end{equation}
Since  $y_1, y_2 \in B_{\frac{\aa R}{6}}(x_1)$ we have 
\begin{equation}
d(x, y) \le  d(\wb x,\wb y) + \frac{\aa R}{6}.
 \label{LocEmbEQ2}
 \end{equation}  
On the other hand since  $R = \min_{1 \le i < j \le k - 1}{d(x_i,x_j)}$ we have 

$$ \max\{d(\wb x,\wb z) + \frac{\aa}{2} d(\wb z,\wb y), \frac{\aa}{2} d(\wb x,\wb z) + d(\wb z,\wb y)\} \le $$
\begin{equation}
 \le  \max\{d(\wb x,\wb z) + \aa d(\wb z,\wb y), \aa d(\wb x,\wb z) + d(\wb z,\wb y)\} - \frac{\aa R}{2}.  
 \label{LocEmbEQ3}
 \end{equation}  
Once again we use that $y_1, y_2 \in B_{\frac{\aa R}{6}}(x_1)$
$$  \max\{d(\wb x,\wb z) + \aa d(\wb z,\wb y), \aa d(\wb x,\wb z) + d(\wb z,\wb y)\}  \le $$ 
\begin{equation}
\le \max\{d(x,z) + \aa d(z,y), \aa d(x,z) + d(z,y)\}  + 2\frac{\aa R}{6}.  
 \label{LocEmbEQ4}
 \end{equation}  
Adding together (\ref{LocEmbEQ1} - \ref{LocEmbEQ4}) provides
$$d(x, y) \le \max\{d(x,z) + \aa d(z,y), \aa d(x,z) + d(z,y)\}.$$

\text{Case C:} $\{x,y,z\} =  \{y_1, y_2, x_i\}$. If $z = x_i$ from $y_1, y_2 \in B_{\frac{\aa R}{6}}(x_1)$ 
we have that $d(x, y) \le 2\frac{\aa R}{6}$ and $d(x,z) \ge R - \frac{\aa R}{6}$. And (\ref{SRA-ineq})
follows.

 Finally the last case is $z = y_1$, $x = x_i$, $y = y_2$. We have to show that 
 $$d(x_i,y_2) \le \max\{d(x_i,y_1) + \aa d(y_1,y_2), \aa d(x_i,y_1) + d(y_1,y_2)\}. $$
 It suffices to show that 
 $$d(x_i,y_2) \le  d(x_i,y_1) + \aa d(y_1,y_2).$$
 Which follows directly from (\ref{NotDist}).
\end{proof}

\begin{proof}[Proof of Theorem \ref{EmbThm}]
First we need the following proposition.
\begin{proposition}[\cite{LOZ}, Theorem 6] \label{MeetDoubling}
For  $0 < \aa < 1$, $k \in \N$ there exist $L=L(k,\alpha)$ such that every 
metric space $(X,d)$, which is free of $k$-point $\SRA(\aa)$-subspaces, 
satisfies the doubling condition with the constant $L$.
\end{proposition}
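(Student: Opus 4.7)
The plan is to reduce the doubling property to a bound on the cardinality of $R/2$-separated subsets of balls, and then apply a Ramsey-type argument on pairwise distances to contradict the $\SRA(\aa)$-free hypothesis. First I would note that if every $R/2$-separated subset $S \subset B_R(x_0)$ satisfies $|S| \le L(k,\aa)$, then taking $S$ maximal gives by construction a cover of $B_R(x_0)$ by at most $L(k,\aa)$ balls of radius $R/2$. So it suffices to bound $|S|$.

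The quantitative core is the following observation. If a finite set $T \subset X$ has all pairwise distances lying in some interval $[s,(1+\aa)s]$ with $s > 0$, then for every triple $x,z,y \in T$ we have $d(x,z),d(z,y) \ge s$ and $d(x,y) \le (1+\aa)s$, hence
\[
d(x,y) \le (1+\aa)s \le d(x,z) + \aa d(z,y),
\]
and symmetrically $d(x,y) \le \aa d(x,z) + d(z,y)$. Thus no triple in $T$ violates the $\SRA(\aa)$-condition in the strict sense of Definition \ref{def:sra-free}, and so by the $\SRA(\aa)$-freeness of $X$ the cardinality of $T$ must be strictly less than $k$.

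Now, for an $R/2$-separated set $S \subset B_R(x_0)$, all pairwise distances in $S$ lie in $[R/2, 2R]$. I would partition this interval geometrically into $m := \lceil \log_{1+\aa} 4 \rceil$ bands of the multiplicative form $\bigl[(1+\aa)^i (R/2),\,(1+\aa)^{i+1}(R/2)\bigr]$ for $i=0,1,\dots,m-1$, and color each unordered pair $\{p,q\} \subset S$ by the band containing $d(p,q)$. This is an $m$-edge-coloring of the complete graph on $|S|$ vertices. By the multicolor Ramsey theorem, if $|S| \ge R(k;m)$, the smallest $N$ for which every $m$-edge-coloring of $K_N$ admits a monochromatic $K_k$, then $S$ contains $k$ points whose pairwise distances all lie in a single factor-$(1+\aa)$ band, contradicting the previous paragraph. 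Hence $|S| < R(k;m) =: L(k,\aa)$, which gives doubling with constant $L(k,\aa)$.

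The only real obstacle is choosing the band width correctly: the factor $(1+\aa)$ is dictated precisely by the $\SRA(\aa)$-violation inequality, since any larger multiplicative width would fail to guarantee the $\SRA(\aa)$-condition on the monochromatic clique. With this width fixed, the number of bands $m$ scales like $1/\log(1+\aa)$, and the resulting bound $L(k,\aa) := R(k;m)$ depends only on $k$ and $\aa$, as required. No further metric hypothesis on $X$ enters the argument.
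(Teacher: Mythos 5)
The paper does not prove this proposition but cites it from \cite{LOZ}, so there is no in-paper proof to compare against. Your argument is nonetheless correct and complete: the reduction of doubling to bounding $R/2$-separated subsets of $B_R(x_0)$ is standard, the observation that a set with all pairwise distances confined to a multiplicative band of factor $1+\aa$ satisfies the $\SRA(\aa)$-condition is verified correctly (for a distinct triple $x,z,y$ one has $d(x,y)\le(1+\aa)s\le d(x,z)+\aa\,d(z,y)$ and symmetrically, and degenerate triples satisfy the inequality trivially), and the multicolor Ramsey theorem with $m=\lceil\log_{1+\aa}4\rceil$ bands then forces a monochromatic $K_k$ once $|S|\ge R(k;m)$, contradicting $\SRA(\aa)$-freeness. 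This Ramsey-on-distance-bands argument is the natural one and is the standard route to such doubling bounds.
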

We are going to prove the theorem via induction by $k$.
The base is that the theorem is true for $k = 3$.
\begin{proof}[Proof of  the base.] 
\textit{Case A:  $\diam(X) < \infty$.}  
Fix a maximal $\frac{\diam(X)}{10}$-separated set $\{x_1,\dots,x_n\}$ in $X$. 
By Proposition \ref{MeetDoubling} we have that $n$ is bounded from above by some $C(\aa)$.
We claim that a map $\phi:X \rightarrow \R^{n}$ given by 
$$\phi_i(y) = \dist(y, x_{i})$$
has distortion less or equal to $\sqrt{C(\aa)}\max\{10,\frac{1}{\aa}\}$.
Clearly $\Lip(\phi) \le \sqrt{n} \le \sqrt{C(\aa)}$. Thus it suffices to show that for every 
$y_1,y_2 \in X$ there exists $1 \le i' \le n$ such that 
\begin{equation}
\vert    \phi_{i'}(y_1) - \phi_{i'}(y_2)   \vert  \ge \min\{\frac{1}{10}, \aa\} d(y_1, y_2).
\label{BaseCoLip}
\end{equation}
Since $\{x_1,\dots,x_n\}$ is a maximal $\frac{\diam(X)}{10}$-separated set 
there exists $x_i$ such that $d(x_i, y_1) \le \frac{\diam(X)}{10}$.

If we have $d(y_1, y_2) \ge \frac{3\diam(X)}{10}$ then we can take $x_{i'} = x_i$.
Indeed, by the triangle inequality 
we have $d(x_i, y_2) \ge \frac{2\diam(X)}{10}$. Which implies that 
$$\vert  \phi_i(y_1) - \phi_i(y_2) \vert   \ge \frac{\diam(X)}{10}  \ge \frac{1}{10}d(y_1, y_2).$$
And (\ref{BaseCoLip}) follows.

If we have $d(y_1, y_2) \le \frac{3\diam(X)}{10}$ then there exists $x \in X$ such that 
$d(x_i,x) \ge \frac{\diam{X}}{2}$.  Once again we apply that $\{x_1,\dots,x_n\}$ is a 
maximal $\frac{\diam(X)}{10}$-separated set and obtain that there exists $x_j$ such 
that $d(x_j,x_i) \ge \frac{\diam{X}}{2} - \frac{\diam(X)}{10}$. In this case we can
take $x_{i'} = x_j$. We claim that 
\begin{equation}
\vert   \phi_j(y_1) - \phi_j(y_2) \vert    \ge \aa  d(y_1, y_2).
\label{BaseCoLip2}
\end{equation}
By contradiction suppose that 
\begin{equation}
\vert    \phi_j(y_1) - \phi_j(y_2) \vert  < \aa  d(y_1, y_2).
\label{BaseCoLip3}
\end{equation}
From (\ref{BaseCoLip3}) we have 
$$d(x_j, y_1)   <  d(x_j, y_2) + \aa  d(y_1, y_2),$$
$$d(x_j, y_2)   <  d(x_j, y_1) + \aa  d(y_1, y_2).$$
On the other hand we know that a subspace $\{x_i,y_1,y_2\}$ 
does not satisfy $\SRA(\aa)$-condition. Thus,
 $$d(y_1, y_2)  >  \max\{d(x_j, y_1) + \aa d(x_j, y_2),  d(x_j, y_2) + \aa d(x_j, y_1)\} \ge d(x_j, y_1).$$
By the triangle inequality we have 
$$d(x_j,  y_1) \ge d(x_j, x_i) - d(x_i, y_1) \ge \diam(X)(\frac{1}{2} - \frac{1}{10} - \frac{1}{10}) \ge 
\frac{3\diam(X)}{10}.$$
We conclude that $$d(y_1, y_2) > \frac{3\diam(X)}{10}.$$
Which contradicts with our assumption.

\textit{Case B: $\diam(X) = \infty$.} This case follows from the following standard limiting argument.
Fix a point $x \in X$. From Case A we have that there exist $N = N(\aa), D = D(\aa)$ 
such that for every $m \in \N$ there exist an embedding 
$\phi_n:B_{m}(x) \rightarrow \R^N$ which distortion does not exceed $D$ and such that 
$\phi(x) = 0$.

Since $X$ satisfies the doubling condition it is also separable. We fix a dense countable subset 
$Y \subset X$. By the diagonal argument there exists a map $\phi:Y \rightarrow \R^N$
having distortion less or equal to $D$. The continuous extension of $\phi$ provides the 
required embedding.
\end{proof}
From now we have to deal with the inductive step. We will show 
that theorem holds for $k \in \N$ and $0 < \aa < 1$ 
under assumption that it holds for $k - 1$ and $\frac{\aa}{2}$.
\begin{proof}
For a point $x \in X$ we define $R(x) \in [0, \infty]$ as the infimum of $R > 0$ such 
that every $Y \subset X$ satisfying the $\SRA(\frac{\aa}{2})$-condition and also such that
$x \in Y$, $\vert \vert Y \vert \vert = k - 1$ we have 
$$\min_{y_1 \neq y_2 \in Y}d(y_1, y_2) \le R.$$ 
We define $X' \subset X$ as a maximal subset of $X$ such that for every $x \neq y \in X'$,
$$d(x, y) > \min\{R(x), R(y)\}.$$ 

 \textit{Claim: $X'$ is free of $(k-1)$-point $\SRA(\frac{\aa}{2})$-subspaces.}
 \begin{proof}
This follows from definitions of $R$ and $X'$ in a tautological fashion.
 Indeed, suppose that $(k-1)$-point $Y \subset X'$ satisfies the $\SRA(\frac{\aa}{2})$-condition.
 Let $x \in Y$ be such that $$R(x) = \min_{y \in Y}R(y).$$
From the definition of $R(x)$ we conclude that there exist $y_1 \neq y_2 \in Y$ such that 
$$d(y_1, y_2) \le R(x).$$
Thus we have 
$$R(x) = \min_{y \in Y}R(y) \le \min\{R(y_1), R(y_2)\} < d(y_1, y_2) \le R(x).$$
Which is a contradiction.
\end{proof}

By the inductive assumption there exist $N' = N'(k-1,\frac{\aa}{2})$, $D' = D'(k-1,\frac{\aa}{2})$
 and a map $\phi:X' \rightarrow \R^{\N'}$ which bi-Lipschitz distortion does not exceed $D$.
We are going to construct the required embedding by extending $\phi$ via Theorem \ref{ExtThm}.

Fix a point $x \in X \setminus X'$. Note that that the only possible reason why we cannot 
add $x$ to $X'$ is that there exists $x' \in X'$ such that $d(x,x') < R(x).$
Thus we have 
\begin{equation} d(x, X') \le R(x). \label{Close} \end{equation}

From Lemma \ref{LocEmb} it follows that $B_{\frac{aR(x)}{6}}(x)$
allows an embedding into $\R^{k-2}$ with distortion $\frac{\sqrt{k-2}}{\aa }$.
Combined with (\ref{Close}) this provides all required ingredients  for Theorem \ref{ExtThm}.
So the induction step follows.
\end{proof}
\end{proof}

\section{Proof of Theorem \ref{AlexThm}}
{\color{Black}It was shown in \cite{LOZ} that balls in Alexandrov spaces are 
$\SRA(\aa)$-free for every $0 < \aa < 1$ via the $\ATB$-condition.}
To prove Theorem \ref{AlexThm} we are going to reformulate this result  
in a slightly stronger form. {\color{Black}To do this we take from \cite{LOZ}}
\begin{itemize}
\item{the definition of $\ATB(\ee)$-condition,}
\item{the theorem saying that metric spaces
 satisfying  $\ATB(\ee)$-condition are $\SRA(\aa)$-free,}
\item{and the theorem providing $\ATB(\ee)$-condition for Alexandrov spaces.}
\end{itemize}

\begin{definition}[Angular total boundedness; $\ATB(\ee)$]\label{ATB}
Let $(X,d)$ be a metric space and $0<\ee<\pi/2$.
We say that a point $p \in X$ satisfies the \emph{$\ATB(\ee)$-condition}
if there exist some $L \in \N$ and $R > 0$ such that,
for every $y_1,\dots,y_L \in B_R(p) \setminus \{p\}$,
we can find $i \neq j$ satisfying
\[ \wt\angle y_i p y_j < \ee, \]
where  $\wt \angle$ denotes the planar comparison angle.
We say that $X$ is a \emph{globally $\ATB(\ee)$-space}
if there exists $L \in \N$ such that every $p \in X$ satisfies the $\ATB(\ee)$-condition
with constants $L$ and $R = \infty$.
We say that $X$ is a \emph{locally $\ATB(\ee)$-space} if, for any $x \in X$,
there exist $L \in \N$ and $R > 0$ such that every $p \in B_R(x) $ satisfies the $\ATB(\ee)$-condition
with constants $L$ and $R$.
\end{definition}

\begin{proposition}[\cite{LOZ}, Theorem 2]\label{ATB-no-SRA} 
For every $L \in \N$ there exists $N(L) \in \N$ satisfying the following.
{\color{Black}For every $0<\aa<1$, $\ee :=\arccos(\aa)/2 > 0$, $r > 0$,
 every metric space $(X,d)$ and every $x \in X$ such that 
every $p \in B_r(x)$ satisfies the $\ATB(\ee)$-condition with constants $L$ and $2r$
we have that $B_r(x)$ is free of $N(L)$-point 
   $\SRA(\aa)$-subspaces.}   
\end{proposition}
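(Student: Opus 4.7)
The plan is to prove the contrapositive: given an $N$-point subset $Y \subset B_r(x)$ in which every triple satisfies $\SRA(\aa)$, bound $N$ from above by a quantity depending only on $L$.

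The crucial quantitative input is the following geometric lemma. Suppose $p, q_1, q_2$ are three points with $\wt\angle q_1 p q_2 < \ee$, and write $a = d(p, q_1)$, $b = d(p, q_2)$, $c = d(q_1, q_2)$ with $a \le b$. The Euclidean law of cosines together with the half-angle identity $\cos(\ee) = \sqrt{(1+\aa)/2}$ yields $c^2 < a^2 + b^2 - 2ab\sqrt{(1+\aa)/2}$, and an elementary algebraic manipulation shows that $c < b - \aa a$ (and also $c < (b - a)/\aa$) holds whenever $b/a > M$, where $M$ is an absolute constant; a direct computation gives $M < 4/3$ uniformly in $\aa \in (0, 1)$. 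Under this hypothesis one has $b > \max\{a + \aa c, \aa a + c\}$, i.e., the $\SRA(\aa)$-condition is violated for the triple $(p, q_1, q_2)$ in the roles $(x, z, y) = (p, q_1, q_2)$. The key point is that $M$ is independent of $\aa$.

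With this lemma in hand, the task becomes to locate a configuration $(p, q_1, q_2) \subset Y$ exhibiting small angle at $p$ and a ratio $b/a$ exceeding $M$. For each basepoint $p \in Y$, every other point lies in $B_{2r}(p) \setminus \{p\}$ by the triangle inequality, so the $\ATB(\ee)$-hypothesis at $p$ yields that the graph $H_p$ on $Y \setminus \{p\}$ whose edges record ``small angle at $p$'' has independence number at most $L - 1$. By Ramsey's theorem, once $|Y|$ exceeds a Ramsey-type threshold depending only on $L$, $H_p$ contains an arbitrarily large clique, i.e., many points pairwise at small angle at $p$.

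The final step is an iterative descent. Starting from a large clique $C_1$ at a basepoint $p_1$, either some pair in $C_1$ has $d(p_1, \cdot)$-distances in ratio exceeding $M$, closing the argument via the geometric lemma, or else all such distances lie within a single factor-$M$ range. In the latter case one descends: choose $p_2 \in C_1$, extract via Ramsey a subclique $C_2 \subset C_1 \setminus \{p_2\}$ of points pairwise at small angle at $p_2$, and repeat. Each descent shrinks the cluster by a factor depending only on $L$, so after boundedly many descents the cluster becomes too small to sustain further extractions, forcing the $\SRA(\aa)$-violating configuration to appear at some earlier stage and yielding an explicit bound $N(L)$. The main obstacle is the precise bookkeeping of the iterative step --- coordinating the successively chosen basepoints so that the angular and distance constraints inherited from each stage continue to combine usefully at the next, while keeping every quantitative constant free of $\aa$-dependence. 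The $\aa$-independence of the geometric lemma's threshold $M$, combined with the fact that all Ramsey inputs depend only on $L$, is what guarantees that the final bound $N(L)$ depends only on $L$.
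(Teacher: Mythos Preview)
The paper does not supply its own proof of this proposition; it is simply quoted from \cite{LOZ}. So the comparison is to whether your sketch stands on its own.

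Your geometric lemma is essentially right: if $\wt\angle q_1pq_2<\ee$ and $b/a$ exceeds an absolute constant $M$, the triple violates $\SRA(\aa)$, and $M$ can indeed be taken independent of $\aa$. (A minor point: your claimed bound $M<4/3$ is too optimistic. The second inequality $c<(b-a)/\aa$ requires roughly $b/a>(9+\sqrt{17})/8\approx 1.64$ as $\aa\to 1$; something like $M=2$ works uniformly, but $4/3$ does not.) The Ramsey step extracting large cliques of pairwise-small-angle points is also fine, since $\ATB(\ee)$ forbids $L$-independent sets in the small-angle graph at each basepoint.

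The genuine gap is the termination of your iterative descent. You argue: either some pair in $C_i$ has ratio $>M$ (done), or else descend to $C_{i+1}$; after boundedly many descents the cluster is too small for Ramsey, ``forcing the $\SRA(\aa)$-violating configuration to appear at some earlier stage.'' But this is circular. If $Y$ genuinely satisfies $\SRA(\aa)$, then by your own lemma the ratio never exceeds $M$, so the descent simply runs until Ramsey fails and then stops---with no contradiction. Running out of room is not a violation of $\SRA(\aa)$; you have only shown that the descent has finite length, which is automatic in a finite set.

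To close the argument you need an additional invariant that forces a contradiction after a number of steps bounded \emph{in terms of $L$ alone}. One natural candidate is a diameter contraction: inside a small-angle clique with the ratio bound, one can show $\diam C_{i+1}<\rho\,\diam C_i$ for a uniform $\rho<1$. But this alone only gives a bound on the number of descents in terms of $\diam Y/\min_{y\ne y'}d(y,y')$, which is not controlled by $L$. Some further structural input---e.g.\ exploiting $\SRA$ at the successive basepoints $p_1,p_2,\dots$ themselves, or combining the contraction with the ratio bound across scales---is needed, and this is exactly the ``precise bookkeeping'' you flag but do not carry out. As written, the sketch does not yield a bound $N(L)$.
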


The $\ATB(\ee)$-condition for Alexandrov spaces is provided by \cite[Proposition 14]{LOZ}.
Here we formulate this proposition in a stronger form then in \cite{LOZ}, but the 
same proof works.

\begin{proposition}[$\ATB$ of Alexandrov spaces]\label{CBBs-are-ATB}
For every $k < 0$, $0<\ee<\pi/2$  and $n \in \N$ there exist $L \in \N$ and $r > 0$ satisfying the following. 
For every $n$-dimensional Alexandrov space $X$ of curvature $\ge k$, 
every $x \in X$ satisfies the $\ATB(\ee)$-condition for $L$ and $r$.

For every $n \in \N$, $0<\ee<\pi/2$ there exists $L_0 \in N$ such that every point in
every   $n$-dimensional non-negatively curved Alexandrov space satisfies 
 $\ATB(\ee)$-condition for $L_0$ and $R_0 = \infty$. 
\end{proposition}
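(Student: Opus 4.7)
\emph{Plan.} The strategy is to pass to the space of directions at $p$. For any point $p$ in an $n$-dimensional Alexandrov space $X$ of curvature $\ge k$, the space of directions $\Sigma_p X$ is a compact Alexandrov space of curvature $\ge 1$ and dimension $\le n-1$. By Bishop--Gromov volume comparison applied to $\CBB(1)$-spaces of bounded dimension, the $\delta$-covering number of $\Sigma_p X$ is bounded above by a constant $L(n, \delta)$ depending only on $n$ and $\delta$, uniformly over all such $p$ and all ambient $X$. The $\ATB(\ee)$-condition will then follow from pigeonhole together with a comparison between actual angles and Euclidean comparison angles, which will be straightforward for $k = 0$ and will require a small-scale argument for $k < 0$.

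\emph{Main steps.} Set $\delta := \ee / 3$ and let $L := L(n, \delta)$. Given any $L + 1$ points $y_0, \dots, y_L \in B_R(p) \setminus \{p\}$, I would fix a geodesic from $p$ to each $y_i$ and let $\xi_i \in \Sigma_p X$ be its starting direction. By pigeonhole, two of the $\xi_i$, say $\xi_i$ and $\xi_j$, lie in a common $\delta$-ball of the chosen cover, so the actual angle satisfies $\angle y_i p y_j \le 2\delta = 2\ee/3$. For the non-negatively curved case ($k = 0$), Toponogov's Euclidean angle comparison yields $\wt\angle y_i p y_j \le \angle y_i p y_j < \ee$ with no restriction on scale, so we may take $L_0 := L$ and $R_0 := \infty$. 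For the general case $k < 0$, Toponogov's comparison in the $k$-model gives $\wt\angle^k y_i p y_j \le 2\ee/3$, where $\wt\angle^k$ denotes the comparison angle in the model plane of constant curvature $k$. A direct calculation from the law of cosines in the $k$-plane shows that, uniformly over triangles whose side lengths are at most $2R$, the difference $|\wt\angle y_i p y_j - \wt\angle^k y_i p y_j|$ tends to $0$ as $R \to 0$; choosing $R = R(n, k, \ee)$ small enough so that this difference is at most $\ee/3$ yields $\wt\angle y_i p y_j < \ee$, as required. This gives the uniform constants $L$ and $R$ working simultaneously at every point of every $n$-dimensional $\CBB(k)$-space.

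\emph{Main obstacle.} The principal subtlety is the quantitative closeness of $\wt\angle$ and $\wt\angle^k$ uniformly over triangles with bounded sides: the hyperbolic law of cosines clearly converges to the Euclidean one as the perimeter shrinks, but one must verify uniformity also in the range where one side is much smaller than the others. This can be done by estimating the difference of cosines via Taylor expansion of $\cosh$ and $\sinh$ and then passing to angles, using that $\wt\angle^k \le 2\ee/3$ keeps the angles bounded away from the arccosine singularities at $0$ and $\pi$. A minor nuisance is the possible non-uniqueness of geodesics from $p$: any choice of geodesic $py_i$ gives a valid direction $\xi_i$, and since we only need an upper bound on $\angle y_i p y_j$ (and hence on $\wt\angle y_i p y_j$), any such choice suffices. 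All the remaining ingredients---compactness of $\Sigma_p X$, the curvature bound $\ge 1$ on $\Sigma_p X$, Bishop--Gromov, and Toponogov comparison---are classical tools in the theory of Alexandrov spaces with curvature bounded below.
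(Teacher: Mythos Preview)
The paper does not supply its own proof of this proposition; it simply records that the statement is \cite[Proposition 14]{LOZ} in a slightly stronger form and that ``the same proof works.'' Your argument---bounding the $\delta$-packing number of the space of directions $\Sigma_p X$ (a compact $\CBB(1)$ space of dimension $\le n-1$) via Bishop--Gromov, applying pigeonhole, invoking Toponogov to bound the model comparison angle by the actual angle, and (for $k<0$) shrinking $r$ so that the Euclidean and $k$-model comparison angles are uniformly close---is precisely the standard proof and is almost certainly the one in \cite{LOZ}.

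Two minor remarks. First, the $\ATB$ definition quantifies over $L$ points, not $L+1$, so after covering $\Sigma_p X$ by $L(n,\delta)$ balls of radius $\delta$ you should take the $\ATB$ constant to be $L(n,\delta)+1$ for pigeonhole to apply. Second, the ``main obstacle'' you flag about arccosine singularities is not actually present: a Taylor expansion of the hyperbolic law of cosines gives $|\cos\wt\angle - \cos\wt\angle^k| \le C(k)\,R^2$ uniformly over triangles of perimeter $\le 6R$, and since $\cos(2\ee/3) > \cos\ee$ for $\ee\in(0,\pi/2)$, choosing $R$ so that $C(k)R^2 < \cos(2\ee/3)-\cos\ee$ yields $\cos\wt\angle > \cos\ee$ and hence $\wt\angle < \ee$ directly, with no need to control the inverse cosine near its endpoints.
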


\begin{proof}[Proof of Theorem \ref{AlexThm}]
We only give a proof for the case $k < 0$, 
the proof for the case $k = 0$ uses the same arguments and it is even simpler.

We fix $n \in \N$, $k < 0$ and $R > 0$.  
Next we fix an arbitrary $0<\aa<1$ and set $\ee :=\arccos(\aa)/2 > 0$.
(For the proof of Theorem \ref{AlexThm} we can use the $\SRA$-free condition for any $0<\aa<1$). 

By Proposition \ref{CBBs-are-ATB} we have that there exist $L(k, \ee) \in \N$ and 
$r(k, \ee) > 0$ such that every point in $X$ satisfies the $\ATB(\ee)$-condition for $L$ and $r$.
By Proposition \label{ATB-no-SRA}  there exists $N(k, \ee)$ such that 
any open ball of radius $\frac{r}{2}$ in $X$ is free of $N$-point $\SRA(\aa)$-subspaces.
Since $X$ is an Alexandrov space there exists $\ll(n, k, R)$ such that any ball 
of radius $R$ satisfies doubling condition with a constant  $\ll$.
It is easy to see that two last statements imply that there exist $\wt N = \wt N(n, k, R, \ee) \in \N$
such that any ball of radius $R$ in $X$ is free of $\wt N$-point $\SRA(\aa)$-subspaces
(see \cite[Theorem 6]{ZSC} for details). Thus the existence of the required embedding 
follows from Theorem \ref{EmbThm}.
\end{proof}

\section{Open problems}
\begin{Question}[see \cite{RicciQuest}]
Let $n$ be a natural number. Is it true that there exist $N(n), D(n) > 0$
 such that any complete $n$-dimensional Riemannian manifold of nonnegative 
 Ricci curvature can be embedded into $N$-dimensional Euclidean space with 
 bi-Lipschitz distortion less then $D$?
\end{Question}
See also a similar question by  S. Eriksson-Bique \cite[Conjecture 1–11]{EB}.

\bibliography{circle}

\begin{thebibliography}{10}

\bibitem{AV97}
Pekka Alestalo and Jussi V{\"a}is{\"a}l{\"a}.
\newblock Uniform domains of higher order {I}{I}{I}.
\newblock In {\em Annales Academiae Scientiarum Fennicae Mathematica},
  volume~22, pages 445--464, 1997.

\bibitem{Assouad}
Patrice Assouad.
\newblock Plongements lipschitziens dans $\mathbb{R}^n$.
\newblock {\em Bulletin de la Soci{\'e}t{\'e} Math{\'e}matique de France},
  111:429--448, 1983.

\bibitem{EB}
Sylvester Eriksson-Bique.
\newblock Quantitative bi-lipschitz embeddings of bounded-curvature manifolds
  and orbifolds.
\newblock {\em Geometry \& Topology}, 22(4):1961--2026, 2018.

\bibitem{Hei}
Juha Heinonen.
\newblock {\em Lectures on analysis on metric spaces}.
\newblock Springer Science \& Business Media, 2012.

\bibitem{RicciQuest}
Vladimir~Zolotov (https://mathoverflow.net/users/32454/vladimir zolotov).
\newblock Do manifolds with non-negative ricci curvature allow bi-lipschitz
  embeddings into euclidean spaces?
\newblock MathOverflow.
\newblock URL:https://mathoverflow.net/q/313888 (version: 2018-10-27).

\bibitem{LP-BilEmb}
U.~Lang and C.~Plaut.
\newblock Bilipschitz embeddings of metric spaces into space forms.
\newblock {\em Geometriae Dedicata}, 87(1-3):285--307, 2001.

\bibitem{BS}
E.~Le~Donne, T.~Rajala, and E.~Walsberg.
\newblock Isometric embeddings of snowflakes into finite-dimensional {B}anach
  spaces.
\newblock {\em Proceedings of the American Mathematical Society},
  146(2):685--693, 2018.

\bibitem{LOZ}
Nina {Lebedeva}, Shin-ichi {Ohta}, and Vladimir {Zolotov}.
\newblock {Self-contracted curves in spaces with weak lower curvature bound}.
\newblock {\em arXiv e-prints}, page arXiv:1902.01594, February 2019.

\bibitem{MMMR}
Sepideh Mahabadi, Konstantin Makarychev, Yury Makarychev, and Ilya Razenshteyn.
\newblock Nonlinear dimension reduction via outer bi-lipschitz extensions.
\newblock In {\em Proceedings of the 50th Annual ACM SIGACT Symposium on Theory
  of Computing}, pages 1088--1101. ACM, 2018.

\bibitem{NN10}
Assaf Naor and Ofer Neiman.
\newblock Assouad's theorem with dimension independent of the snowflaking.
\newblock {\em Revista Matematica Iberoamericana}, 28(4):1123--1143, 2012.

\bibitem{Romney}
Matthew Romney.
\newblock Conformal grushin spaces.
\newblock {\em Conformal Geometry \& Dynamics}, 20(6), 2016.

\bibitem{Seo}
Jeehyeon Seo.
\newblock A characterization of bi-lipschitz embeddable metric spaces in terms
  of local bi-lipschitz embeddability.
\newblock {\em Mathematical Research Letters}, 18(6):1179--1202, 2011.

\bibitem{ZSC}
V.~{Zolotov}.
\newblock {Sets with small angles in self-contracted curves}.
\newblock {\em ArXiv e-prints}, March 2018.

\end{thebibliography}

\bibliographystyle{plain}

\end{document}